\newtheorem{theorem}{Theorem}
\newtheorem{proposition}{Proposition}
\newtheorem{example}{Example}
\newtheorem{observation}{Observation}
\def\nn{{\mathbb N}}
\def\BB{{\mathcal B}} \def\CC{{\mathcal C}}
 \def\SS{{\mathcal S}}
\def\supp{{\rm supp}}
\def\fix{{\rm fix}}
\def\t{\tau}
\def\be{\beta}
\def\al{\alpha}
\title{Integer sequences and $k$-commuting permutations}
\author{Luis Manuel Rivera\footnote{Unidad Acad\'emica de Matem\'aticas, Universidad Aut\'onoma de Zacatecas, Calzada Solidaridad entronque Paseo a la Bufa, Zac., Zacatecas, M\'exico, 98000.  E-mail: luismanuel.rivera@gmail.com.}
}
\date{}              
\begin{document}
\maketitle
\begin{abstract}
 Let $\be$ be any permutation on $n$ symbols, and let $c(k, \be)$ be the number of permutations that $k$-commute with $\be$. The cycle type of a permutation $\be$ is a vector $(c_1, \dots, c_n)$ such that $\be$ has exactly $c_i$ cycles of length $i$ in its disjoint cycle factorization. In this article we obtain formulas for $c(k, \be)$, for some cycle types. We also express these formulas in terms of integer sequences as given in ``The On-line Encyclopedia of Integer Sequences" (OEIS). For some of these sequences, we obtain either new interpretations or relationships with sequences in the OEIS database.
 \end{abstract}


\section{Introduction}
Let $S_n$ denote the group of permutations on the set $\{1, \dots, n\}$. For a  nonnegative integer, $k$, we say that two permutations, $\al, \be \in S_n$ {\it $k$-commute} (resp. {\it $(\leq k)$-commute}) if $H(\al\be, \be\al)=k$ (resp. $H(\al\be, \be\al) \leq k$), where $H$ denotes the Hamming metric between permutations (see Deza and Huang~\cite{dezahuang} for a survey about metrics on permutations). For a given permutation, $\be$, and a nonnegative integer, $k$, let $c(k, \be)$ (resp. $c(\leq k, \be)$) denote the number of permutations that $k$-commute (resp. $(\leq k)$-commute) with $\be$. It is known~\cite{morriv} that $c(k, \be)$ only depends on the cycle type of $\be$. In \cite{morriv}, we began the study of $k$-commuting permutations and presented the first partial results for the problem of computing $c(k, \be)$. The original motivation for studying these types of questions was to develop tools in order to solve the open problem of determining the stability of the equation $xy=yx$ in permutations\footnote{During the review process,  Arzhantseva and P\u{a}unescu \cite{arz-pau}, proved that the equation $xy=yx$ is stable in permutations using ultraproduct techniques.} (see \cite{glebriv1, morriv} for the definitions). Also, the comprehensive problem of determining $c(k, \be)$ is interesting in the context of integer sequences. The problem of computing explicit formulas for $c(k, \be)$, for any $k$ and any $\be$, seems to be a difficult task in general, however we think that for some choices of $k$ and $\be$ the problem is manageable. For example, in \cite{morriv} it was found a characterization of permutations that $k$-commute with a given permutation $\be$ and, as consequence, formulas for $c(k, \be)$ when $k=3, 4$ were obtained. For $k > 4$, the problem was solved only when $\be$ is either a transposition,  a fixed-point free involution, or an $n$-cycle. Based on this previous work, we continue this line of research and we obtain formulas when $\be$ is either a $3$-cycle, a $4$-cycle, an $(n-1)$-cycle, and other special cases. We also present an upper bound for $c(\leq k, \be)$, when $\be$ is any $n$-cycle, and explicit formulas for $c(\leq k, \be)$, when $\be$ is either a transposition, a $3$-cycle, or a $4$-cycle.

Surprisingly, in~\cite{morriv} and in this work, for some values of $k$ and some permutations $\be$, the number $c(k, \be)$ is shown to be related to some integer sequences in ``The On-line Encyclopedia of Integer Sequences" (OEIS). Consequently, a new interpretation for some of these sequences is pointed out. For example:
\begin{itemize}
\item  Sequence A208528 \cite{deutsch, nacin} corresponds to the ``number of permutations on $n > 1$ having exactly 3 points, $P$, on the boundary of their bounding square." This sequence also counts the number of permutations on $n$ symbols that $3$-commute with a transposition. In Section~\ref{trans-bij} we provide an explicit relationship between these two structures.  
\item For sequence A001044, the formula is $a(n)=(n!)^2$ and has different interpretations \cite{slo-guy}. In this article we show that $a(n)$ also counts the number of permutations on $2n$ symbols that $2n$-commute with a permutation on $2n$ symbols that has exactly $n$ fixed points.
\item  Let $a(n)$ denote the sequence A004320. For $n \geq 3$, $a(n-2)$ is the number of permutations on $n$ symbols that $3$-commute with an $n$-cycle. 
\item Sequence A001105 is given by $a(n)=2n^2$ and has several interpretations. (See~\cite{bern}). We note that $a(n)$ is also the number of permutations on $2n$ symbols that $0$-commute, i.e., that commute, with a permutation whose disjoint cycle factorization consists of a product of two $n$-cycles.
\item Let $a(n)$ denote sequence A027764. For $n \geq 3$, $a(n)$ is also the number of permutations on $n+1$ symbols that $4$-commute with an $(n+1)$-cycle. 
\item Sequence A000165 is the double factorial of even numbers, that is, $a(n)=(2n)!!=2^nn!$, but there are more interpretations. (See~\cite{slo}). We note that $a(n)$ also counts the number of permutations on $2^n$ symbols that $0$-commute with a fixed-point free involution (a permutation whose disjoint cycle factorization consists of $n$ transpositions).   
\end{itemize} 

For other cases, we obtain the following expressions:
\begin{itemize}
\item Let $a(n)$ denote the sequence A027765. We show that ``$8a(n)$ is the number of permutations on $(n+1)$ symbols that $5$-commute with an $(n+1)$-cycle."
\item Let $a(n)$ and $b(n)$ denote sequences A016777 and A052560, respectively. For $n \geq 3$, we show that $a(n-3) \times b(n-3)$ is the number of permutations on $n$ symbols that $3$-commute with a $3$-cycle.  
\item Let $a(n)$ and $b(n)$ denote the sequences A134582 and A052578, respectively. For $n \geq 5$ we show that $a(n-3) \times b(n-4)$ is the number of permutations on $n$ symbols that $5$-commute with a $4$-cycle.
\end{itemize}

The idea of computing our formulas in terms of sequences in the OEIS database was motivated by the Future Projects section in the OEIS Wiki page~\cite{oeis}. One project is searching sequences in books, journals, and preprints, as suggested in the OEIS Wiki page: ``What needs to be done: Scan these journals, books and preprints looking for new sequences or additional references for existing sequences." After finding some equations for $c(k, \be)$, for some $k$ and $\be$, we decided to work through these results to find all the sequences that occur in these formulas. First, we factorized a formula, and then we looked for factors in the OEIS database. Our results were gathered in tables and used to determine new relationships in the OEIS database. 

In Section~\ref{basicdef}, we give some definitions and notation that is used through the paper. In Section~\ref{someformulas}, we present formulas for $c(k, \be)$  when $\be$ is either a $3$-cycle, a $4$-cycle, an $(n-1)$-cycle, or other special cases. We also present an upper bound for $c(\leq k, \be)$ when $\be$ is an $n$-cycle.  In Section~\ref{relations}, we show some relationships on the number $c(k, \be)$ with integer sequences in the OEIS database, where $\be$ is either a transposition, a $3$, $4$, $n$, and $(n-1)$-cycle, or is the product of specific cycles. In Section~\ref{finalc}, we present our final comments and discuss the case $k=0$ for some permutations.
 
\section{Basic definitions}\label{basicdef}

In this section, we give some definitions and notation that used throughout the paper. Let $[n]$ denote the set $\{1, \dots, n\}$ whose elements are called {\it points}. A permutation of $[n]$ is a bijection from $[n]$ onto $[n]$. We use $S_n$ to denote the group of all permutations of $[n]$. We  write $\pi =p_1 \dots  p_n $ for the one-line notation of $\pi \in S_n$, i.e., $\pi(i)=p_i$ for every $i \in [n]$, and $\tau=(a_1  \dots a_m)$ for an $m$-cycle in $S_n$, i.e., $\tau(a_i)=a_{i+1}$, for $1\leq i \leq m-1$, $\tau(a_m)=a_1$, and $\tau(a)=a$ for every $a \in [n] \setminus \{a_1, \dots, a_{m}\}$. The {\it support}  $\supp(\pi)$ of $\pi \in S_n$ is  $\{x \in [n] \colon~\pi(x) \neq x\}$, and the set of fixed points $\fix(\pi)$ of $\pi$ is $[n] \setminus \supp(\pi)$. The product $\al\be$ of permutations is computed first by applying $\be$ and then $\al$. We say that $\pi$ has cycle $\pi'$ or that $\pi'$ is a {\it cycle of} $\pi$, if $\pi'$ is a factor in the disjoint cycle factorization of $\pi$. The {\it cycle type} of a permutation, $\be$,  is a vector $(c_1, \dots, c_n)$ such that $\be$ has exactly $c_i$ cycles of length $i$ in its disjoint cycle factorization.  
The {\it Hamming metric} between permutations $\al, \be \in S_n$, denoted $H(\al, \be)$, is $|\{ a \in [n] \colon~ \al(a) \neq \be(a)\}|$. We say that $\al$ and $\be$ {\it $k$-commute} if $H(\al\be, \be\al)=k$. We say that  $a\in[n]$ is a {\it good commuting point}  (or {\it bad commuting point}) of $\al$ and $\be$ if $\al\be(a)= \be\al(a)$ (or $\al\be(a)\neq \be\al(a)$). 
Usually, we abbreviate good commuting points (or bad commuting points) with {\it g.c.p.} (or {\it b.c.p.}) and we sometimes omit the reference to $\al$ and $\be$. Let $C(k, \be)=\{\al \in S_n \colon~H(\al\be, \be\al)=k\}$ and $c(k, \be)=| C(k, \be)|$. Let $C(\leq k, \be)=\{\al \in S_n \colon~H(\al\be, \be\al)\leq k \}$ and  $c(\leq k, \be)=| C(\leq k, \be)|$. 
In this paper, we use the convention, $m \bmod m = m$, for every positive integer $m$.

\subsection{Blocks in cycles}
This section is based on Section 2.1 in \cite{morriv}. 
Let $\pi \in S_n$. A {\it block} in a cycle $\pi'=(a_1 \dots a_m)$ of $\pi$ is a consecutive nonempty substring $a_i \dots  a_{i+l}$ of $a_i \dots  a_{i-1}$ where $(a_i \dots  a_{i-1})$ is one of the $m$ equivalent expressions of  $\pi'$ (the sums are taken modulo $m$). This definition was motivated by the notion of a block when the permutation is written in one-line-notation as in \cite{bona3, chris}. The {\it length} $|A|$ of block $A=a_i \dots  a_{i+l}$ is the number of elements in the string $A$, i.e., $|A|=l+1$.  
 Two blocks are {\it disjoint} if they do not have points in common. 
The {\it product} $AB$ of two disjoint blocks, $A$ and $B$, not necessarily from the same cycle of $\pi$, is defined as the usual concatenation of strings. Notice that $AB$ is not necessarily a block in a cycle of $\pi$.  If $(a_1 \dots a_m)$ is a cycle of $\pi$ we write $(A_1 \dots A_k)$ to mean that $A_1 \dots A_k=a_{i} \dots a_{i-1}$, where $(a_1 \dots a_m)=(a_{i}  \dots a_{i-1})$. A {\it block partition} of cycle $\pi'$ is a set $\{A_1, \dots , A_l\}$ of pairwise disjoint blocks in $\pi'$ such that there exists a block product $A_{i_1}\dots  A_{i_l}$ such that $\pi'=(A_{i_1}\dots  A_{i_l})$. 
Let $A= P_{1}\dots  P_{k}$ be a block product of $k$ pairwise disjoint blocks, not necessarily from the same cycle of $\pi$, and let $\tau$ be a permutation in $S_k$. The {\it block permutation} $\phi_\t(A)$ of $A$, induced by $\tau$, is defined as the block product $P_{\tau(1)}\dots  P_{\t(k)}$. 

\begin{example}
Let $\pi=(1\;2\;3\;4\;5)(6\;7\;8\;9) \in S_9$. Some blocks in cycles of $\pi$ are $P_1=1\;2\;3$, $P_2=4$, $P_3=6$, $P_4=7\;8\;9$. One block in $(1\;2\;3\;4\;5)$ is $P_5=3\;4\;5\;1\;2$. The set of blocks $\{P_3, P_4\}$ is a block partition of $(6\;7\;8\;9)$. The product $P_1P_2$ is a block in $(1\;2\;3\;4\;5)$. The product $P_1P_3=1\;2\;3\;6$ is not a block in any cycle of $\pi$.    
\end{example}

\begin{example}
Let $\pi'=(3\;4\;1\;2\;6)$ be a cycle of $\pi  \in S_6$ and $P=P_1P_2P_3$ the block product given by $P_1=3\;4$, $P_2=1$ and $P_3=2\;6$.  Let $\al=(2 \;3 \;1) \in S_3$. The block permutation $\phi_\al(P)$  is $P_2P_3P_1=1\;2\;6\;3\;4$, where, for example, $P_3P_1=2\;6\;3\;4$ is a block in $\pi'$. If $\tau=(1\;3)$, then $\phi_\tau(P)=P_3P_2P_1=2\;6\;1\;3\;4$, where $P_3P_2=2\;6\;1$ is not a block in $\pi'$. 
 \end{example}
If $\pi \in S_n$ and $X \subseteq [n]$, the restriction function of $\pi$ to set $X$ is denoted by $\pi|_X$, i.e., $\pi|_X \colon X \to X$ is defined as $\pi|_X(a)=\pi(a)$ for every $a \in X$. Let $\al, \be \in S_n$. Let $\be'=(b_1\dots  b_m)$ be a cycle of $\be$. As $\al \be' \al^{-1}=(\al(b_1) \dots   \al(b_m))$ (see, e.g., \cite[Prop.\ 10, p.\ 125]{dum}) we use the following notation for $\al|_{\supp(\be')}$ 
\begin{equation}\label{alfares}
\al|_{\supp(\be')}=\left(
\begin{array}{ccccccccc}
 b_1 & \dots   & b_m \\
\al(b_1) &  \dots   & \al(b_m)
 \end{array}
\right).
\end{equation}
If $\al|_{\supp(\be')}$ is written as in (\ref{alfares}), we write 
\begin{equation}\label{alfares2}
\al|_{\supp(\be'),  k}=\left(\begin{array}{cccc}
B_1  \dots   B_k   \\
 J_1     \dots   J_k  
 \end{array} \right),
\end{equation}  
to mean that $B_1  \dots   B_k=b_1\dots b_m$ and $J_1, \dots , J_k$ are blocks in cycles of $\be$, where $J_1 \dots  J_k=\al(b_1) \dots  \al(b_m)$ and  $|J_i|=|B_i|$, for $1\leq i \leq k$. This notation is called  {\it block notation} (with respect to $\be$) of $\al|_{\supp(\be')}$. Notice that this notation depends on the particular selection of one of the $m$ equivalent cyclic expressions of $\be$: $(b_1\dots b_m)$, $(b_2 \dots b_1)$, \dots, $(b_m \dots b_{m-1})$. Sometimes we omit $k$ in $\al|_{\supp(\be'),  k}$.
\begin{example}\label{preex}
Let $\al, \be \in S_6$, where $\al=(1\;2\;3)(4\;5\;6)$ and $\be=(1 \; 2\;3\;4)(5\;6)$. If $\be'=(1 \; 2\;3\;4)$, then $\al(1 \; 2\;3\;4)\al^{-1}=(2 \;3\;1\;5)$ and $\al|_{\supp(\be')}$ can be written as 
\[
\al|_{\supp(\be')}=\left(
\begin{array}{ccccccccc}
1 & 2 & 3  & 4 \\
2 & 3 & 1  & 5
 \end{array}
\right).
\]
Two ways of writing $\al|_{\supp(\be')}$ in block notation are
\[
\left(
\begin{array}{|cc|c|c|ccccc}
1 & 2 & 3  & 4 \\
2 & 3 & 1  & 5
 \end{array}
\right), \hspace{0.3cm}
\left(
\begin{array}{|c|c|c|c|ccccc}
1 & 2 & 3  & 4 \\
2 & 3 & 1  & 5
 \end{array}
\right),
\]
where the vertical lines denote the limits of the blocks. If $\be'=(2\;3\;4\;1)$ then block notation of $\al_{\supp(\be')}$ is
\[
\left(
\begin{array}{|c|c|c|c|ccccc}
 2 & 3  & 4 & 1\\
 3 & 1  & 5 & 2
 \end{array}
\right).
\]
\end{example}
\section{Formulas for $c(k, \be)$ for some cycle types of $\be$}\label{someformulas}

In this section, we show some formulas for $c(k, \be)$ when $\be$ is either a $3$-cycle, a $4$-cycle, or an $(n-1)$-cycle. The following observation is easy to prove directly or as a consequence of Theorem 3.8 in~\cite{morriv}.  
\begin{observation}\label{fixedp}
Let $\be$ be any permutation. 
\begin{enumerate}
\item If $x \in \fix(\be)$, then $\al\be(x)=\be\al(x)$ if and only if $\al(x) \in \fix(\be)$.
\item If $x \in \supp(\be)$ and $\al(x) \in \fix(\be)$, then  $\al\be(x) \neq \be\al(x)$.
\end{enumerate}
\end{observation}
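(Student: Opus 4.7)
The plan is to prove both parts directly from the definitions of $\fix$ and $\supp$, together with the injectivity of $\al$. No auxiliary machinery from the paper (blocks, cycle structure, the characterization theorems of \cite{morriv}) should be needed; the statement is essentially a bookkeeping lemma that isolates two elementary facts used repeatedly later.

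For part (1), I would fix $x \in \fix(\be)$ and rewrite both sides of the commuting equation. Since $\be(x)=x$, the left side collapses: $\al\be(x)=\al(x)$. Hence the equation $\al\be(x)=\be\al(x)$ becomes $\al(x)=\be\al(x)$, which is precisely the assertion that $\al(x)$ is a fixed point of $\be$. Both implications follow at once, without any case analysis.

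For part (2), I would assume $x\in\supp(\be)$ and $\al(x)\in\fix(\be)$ and argue by contradiction. The second hypothesis gives $\be\al(x)=\al(x)$. If we also had $\al\be(x)=\be\al(x)$, then $\al\be(x)=\al(x)$, and since $\al$ is a bijection, cancelling $\al$ on the left yields $\be(x)=x$, contradicting $x\in\supp(\be)$. Therefore $\al\be(x)\ne\be\al(x)$.

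There is essentially no obstacle here: each part is a one-line manipulation. The only point requiring any care is to remember that $\al$ is injective when cancelling in part~(2), and to observe in part~(1) that the equivalence becomes a tautology once $\be(x)$ is replaced by $x$. I would present the proof as two short paragraphs mirroring the two items of the statement.
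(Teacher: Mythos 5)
Your proof is correct and matches the route the paper intends: the paper gives no written proof, merely remarking that the observation is ``easy to prove directly,'' and your two one-line arguments (collapsing $\al\be(x)$ to $\al(x)$ in part (1), and cancelling the injective $\al$ in part (2)) are exactly that direct verification.
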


\subsection{The case of $m$-cycles}

Let $\be_m=(b_1\dots b_m) \in S_n$ such that $\fix(\be_m)=\{f_1, \dots, f_{n-m}\}$. By Theorem 3.8 in \cite{morriv}, and without loss of generality, we have that any permutation $\al$ that $k$-commutes with $\be_m$, has the following block notation

\begin{equation}\label{alpha-arr}
\left(
\begin{array}{cccccccc}
B_1  & \dots  &B_{k_2}   \\
J_1 & \dots  & J_{k_2} 
 \end{array}\right)
 \left( \begin{array}{cccccccc}
f_1 \\
b_1' 
 \end{array} \right) \dots 
 \left( \begin{array}{cccccccc}
f_{k_1} \\
b_{k_1}' 
 \end{array} \right)
 \left( \begin{array}{cccccccc}
f_{k_1+1} \\
f_1' 
 \end{array} \right) \dots \left( \begin{array}{cccccccc}
f_{n-m} \\
f_{n-m-k_1}' 
 \end{array} \right),
\end{equation}
where: 1) $(B_1\dots B_{k_2})=(b_1 \dots b_m)$; 2) $f_i$ and $f_j'$ are fixed points of $\be_m$ and $b_l$ belongs to $\supp(\be_m)$, for every $i$, $j$ and $l$; 4) $J_i$ is either a block in $(b_1 \dots b_m)$ or a block in a $1$-cycle of $\be_m$, for $1\leq i \leq k_2$; 5) $J_iJ_{i+1 \pmod {k_{2}}}$ is not a block in $(b_1 \dots b_m)$, for $1\leq i \leq k_2$. 
 
Computing a general formula for $c(k, \be_m)$ for every $m$ and every $k$ seems to be a difficult task because we need to consider all solutions of the equation $k=k_1+k_2$ subject to the restriction $0 \leq k_1 \leq k_2$. Furthermore, for every one of those solutions we need to consider all possible ways to write $\al$ as in~(\ref{alpha-arr}). However, we have obtained some formulas for some choices of $k$ and $m$.  

\begin{proposition}\label{final}
Let $n, m \in \nn$, with $m \geq 2$ and $n\geq 2m-1$. If $\be_m \in S_n$ is an $m$-cycle, then
\begin{enumerate}
\item $c(2m, \be_m) = m!(n-m)!\binom{n-m}{m}$.
\item $c(2m-1, \be_m) =   m(n-m)!m!\binom{n-m}{m-1}$.
\end{enumerate}

\end{proposition}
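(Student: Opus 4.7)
The plan is to invoke the characterization (\ref{alpha-arr}) from~\cite{morriv}: every $\al$ that $k$-commutes with the $m$-cycle $\be_m$ factors as a cycle part written in block notation with $k_2$ blocks, a part sending $k_1$ fixed points of $\be_m$ into $\supp(\be_m)$, and a part permuting the remaining fixed points among themselves, where $k=k_1+k_2$ and $0\le k_1\le k_2$. A crucial implicit bound is $k_2\le m$, because $B_1\dots B_{k_2}$ partitions the length-$m$ string $b_1\dots b_m$ into $k_2$ nonempty substrings. For $k=2m$ this forces $(k_1,k_2)=(m,m)$, and for $k=2m-1$ it forces $(k_1,k_2)=(m-1,m)$; these are the only feasible pairs.

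In both cases $k_2=m$ forces each $B_i$ and each $J_i$ to have length exactly $1$. I would then check that the forbidden-merging condition ``$J_i J_{i+1 \pmod{k_2}}$ is not a block in $(b_1\dots b_m)$'' is automatic: for $(m,m)$ all the $J_i$'s are fixed points of $\be_m$, and for $(m-1,m)$ exactly one $J_i$ lies in $\supp(\be_m)$, so in either case no two consecutive $J_i$'s can concatenate to a length-two block of the cycle. After this verification, no block condition is binding and it suffices to count the admissible images of $\al$ under the global constraints imposed by $k_1$.

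The counting is then direct. For $(k_1,k_2)=(m,m)$, the whole of $\supp(\be_m)$ is the image of $m$ fixed points, so I would pick the target $m$-subset of $\fix(\be_m)$ in $\binom{n-m}{m}$ ways, a bijection $\supp(\be_m)\to$ this subset in $m!$ ways, and a bijection $\al|_{\fix(\be_m)}$ onto the remaining $n-m$ elements (which is $\supp(\be_m)$ together with the $n-2m$ untouched fixed points) in $(n-m)!$ ways, giving part~(1). For $(k_1,k_2)=(m-1,m)$, I would choose $\al(\fix(\be_m))\cap\supp(\be_m)$ in $\binom{m}{m-1}=m$ ways, $\al(\fix(\be_m))\cap\fix(\be_m)$ in $\binom{n-m}{m-1}$ ways, then the bijection $\al|_{\fix(\be_m)}$ onto this prescribed $(n-m)$-set in $(n-m)!$ ways and the bijection $\al|_{\supp(\be_m)}$ onto the complementary $m$-set in $m!$ ways, giving part~(2).

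The main point requiring care is to make sure each $\al$ is counted once: the block notation depends on a cyclic representative of $\be_m$, but because $k_2=m$ pins down each $B_i$ as the $i$-th element of the fixed representative, the block notation of a given $\al$ is uniquely determined and no overcounting occurs. In the edge case $n=2m-1$, part~(1) correctly collapses since $\binom{m-1}{m}=0$, reflecting that there are not enough fixed points to absorb all $m$ images of $\supp(\be_m)$ required by the $(m,m)$ structure.
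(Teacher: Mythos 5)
Your proposal is correct and follows essentially the same route as the paper's proof: both reduce to the forced pairs $(k_1,k_2)=(m,m)$ and $(m-1,m)$ via the characterization from \cite{morriv} and then count the admissible bijections on $\supp(\be_m)$ and $\fix(\be_m)$ directly (your single factor $(n-m)!$ is just the paper's $\binom{n-m}{m}m!(n-2m)!$, resp.\ $\binom{n-m}{m-1}(m-1)!(n-2m+1)!$, regrouped). Your explicit check that the block conditions are vacuous when all blocks have length $1$ is a slightly more careful justification of the step the paper delegates to Theorem 3.8 of \cite{morriv}, but it is not a different argument.
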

\begin{proof}

Let $\be_m=(b_1\dots b_m) \in S_n$ such that $\fix(\be_m)=\{f_1, \dots, f_{n-m}\}$. Let $\al$ be any permutation that does not commute with $\be_m$ on exactly $k_1$ (resp. $k_2$) points in $\fix(\be_m)$ (resp. $\supp(\be_m)$). 

 By Observation~\ref{fixedp} we have that if $k=2m$ (or $k=2m-1$) then $k_1=m$ (or $k_1=m-1$). The result is obtained by constructing  all permutations that $k$-commute with $\be_m$. 

{\it Proof of part 1.}  We have that $k_1=k_2=m$ and hence $\al|_{\supp(\be_m)}$ should be a bijection from $\supp(\be_m)$ to a subset $\BB' \subseteq \fix(\be_m)$, with $|\BB'|=m$. There are $\binom{n-m}{m}$ ways to choose $\BB'$ and there are $m!$ bijections from $\supp(\be_m)$ onto  $\BB' $. Now we construct $\al|_{\fix(\be_m)}$. First select a set $\BB=\{f_{i_1}, \dots, f_{i_{m}}\} \subseteq	 \fix(\be_m)$ whose elements will be b.c.p. of $\al$ and $\be_m$ (in $\binom{n-m}{m}$ ways). Then we construct a bijection from $\BB$ onto $\supp(\be_m)$ (in $m!$ ways). Finally, we construct a  bijection from $\fix(\be_m) \setminus \BB$ onto $\fix(\be_m) \setminus \BB'$ (in $(n-2m)!$ ways). Therefore, we have
\[
c(2m, \be_m)=\binom{n-m}{m}^2(m!)^2(n-2m)!=m!(n-m)!\binom{n-m}{m}.
\]
{\it Proof of part 2.} For this case, $k_1=m-1$ and $k_2=m$.  We have that exactly $m-1$  points, in the support of $\be_m$, should be the images under $\al$ of  exactly $m-1$ fixed points. There are $\binom{n-m}{m-1}$ ways to select a subset $\BB \subseteq \fix(\be_m)$, with $|\BB|=m-1$, that will be the b.c.p. of $\al$ and $\be_m$. There are $m$ ways to select a set $\SS' \subset \supp(\be_m)$, $|\SS'|=m-1$, that will be the range of $\al|_{\BB}$ and there are $(m-1)!$ bijections from $\BB$ onto $\SS'$. Let $\{y'\}=\supp(\be_m) \setminus \SS'$. There are $\binom{n-m}{m-1}$ ways to select a set $\BB' \subseteq \fix(\be)$ of $m-1$ fixed points. By Theorem 3.8 in  \cite{morriv}, any bijection from $\supp(\be_m)$ onto $\BB' \cup \{y'\}$ produces $m$ b.c.p. in  $\supp(\be_m)$. There are $m!$ such bijections. Finally, the $(n-2m+1)!$ bijections from $\fix(\be_m) \setminus \BB$ onto $\fix(\be_m) \setminus \BB'$ produce only  g.c.p. as desired. Therefore, we have
\[
c(2m-1, \be_m)=\binom{n-m}{m-1}^2m(m-1)!m!(n-2m+1)!=m(n-m)!m!\binom{n-m}{m-1}.
\] 
\end{proof}

The formula for $c(2m, \be_m)$ can be written in terms of sequences in the OEIS database.  
\begin{eqnarray*}
c(2m, \be_m) &=& \binom{n-m}{m} \times m!(n-m)!\\
&=& A052553(n, m) \times A098361(n, m).
\end{eqnarray*}

\subsection{The case of $3$ and $4$-cycles}

We present formulas for $c(k, \be)$ when $\be$ is either a $3$-cycle or $4$-cycle.
\begin{theorem}\label{the-3cycle}
If $\be_3 \in S_n$ is a $3$-cycle, then
\begin{enumerate}
\item $c(0, \be_3)=3(n-3)!$, $n\geq 3$.
\item $c(3, \be_3)=(3(n-3)+1)3(n-3)!$, $n\geq 3$.
\item $c(4, \be_3)=3(n-3)3(n-3)!$, $n\geq 4$.
\item $c(5, \be_3)=6\binom{n-3}{2}3(n-3)!$, $n\geq 5$.
\item $c(6, \be_3)=2\binom{n-3}{3} 3(n-3)!$, $n\geq 6$.
\item $c(k, \be_3)=0$, for $7 \leq k \leq n$.
\end{enumerate} 
\end{theorem}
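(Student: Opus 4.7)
The plan is to combine the block-notation characterization from equation~(\ref{alpha-arr}) with Observation~\ref{fixedp}. For any $\al\in S_n$, let $k_1$ be the number of bad commuting points (b.c.p.) of $\al$ and $\be_3$ lying in $\fix(\be_3)$ and $k_2$ the number lying in $\supp(\be_3)$, so $H(\al\be_3,\be_3\al)=k_1+k_2$. By Observation~\ref{fixedp}(2) and bijectivity, each b.c.p.\ in $\fix(\be_3)$ forces a distinct element of $\supp(\be_3)$ to be sent into $\fix(\be_3)$, and that element is itself a b.c.p.; hence $k_1\le k_2\le 3$. Part~6 is then immediate because $k\ge 7 > k_1+k_2$ is impossible. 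Parts~4 and~5 are the specializations of Proposition~\ref{final} to $m=3$, corresponding respectively to the pairs $(k_1,k_2)=(2,3)$ and $(3,3)$. For part~1, $\al$ commutes with $\be_3$ iff $\al|_{\supp(\be_3)}$ is a power of $\be_3$ (three choices) and $\al|_{\fix(\be_3)}$ is arbitrary, giving $3(n-3)!$.

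For parts~2 and~3 I would enumerate the admissible pairs with $k_1+k_2\in\{3,4\}$ after establishing two preliminary facts. First, $k_2=1$ never occurs: if $\al|_{\supp(\be_3)}$ is a bijection of $\supp(\be_3)$ then it is a power of $\be_3$ ($k_2=0$) or a transposition (a direct check yields $k_2=3$); if instead some $b_x\in\supp(\be_3)$ has $\al(b_x)\in\fix(\be_3)$, then a short case analysis shows that the $\be_3$-predecessor of $b_x$ is also a b.c.p., so $k_2\ge 2$. Second, $(k_1,k_2)=(2,2)$ cannot occur: when $k_1=2$, exactly one element $w\in\supp(\be_3)$ has $\al(w)\in\supp(\be_3)$, but $\be_3(w)$ is one of the two elements of $\supp(\be_3)$ sent into $\fix(\be_3)$, so $\al\be_3(w)\in\fix(\be_3)$ while $\be_3\al(w)\in\supp(\be_3)$, forcing $w$ to be a b.c.p.\ and $k_2=3$. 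Hence the only contributing pairs are $(0,3)$ and $(1,2)$ for $k=3$, and $(1,3)$ for $k=4$. The count for $(0,3)$ is $3(n-3)!$, namely the three transpositions of $\supp(\be_3)$ times the $(n-3)!$ permutations of $\fix(\be_3)$.

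For the pairs $(1,2)$ and $(1,3)$ I would parameterize $\al$ by: the unique b.c.p.\ $f\in\fix(\be_3)$ ($n-3$ choices), its image $b'=\al(f)\in\supp(\be_3)$ ($3$ choices), the unique $b_x\in\supp(\be_3)$ with $\al(b_x)\in\fix(\be_3)$ ($3$ choices), and the image $f'=\al(b_x)\in\fix(\be_3)$. A subtle point is that $f'$ is allowed to equal $f$ (producing a short cycle of $\al$ inside $\{f,b',b_x\}$), so $f'$ contributes $n-3$ choices rather than $n-4$. Bijectivity then forces the remaining two elements of $\supp(\be_3)$ to map bijectively onto $\supp(\be_3)\setminus\{b'\}$, and the block condition in~(\ref{alpha-arr}) that $J_iJ_{i+1}$ not be a block of $\be_3$ picks out exactly one of the two such bijections giving $k_2=2$ and exactly one giving $k_2=3$ (concretely, the distinction is whether $\al(b_{x+2})=\be_3(\al(b_{x+1}))$ or not, in the cyclic labeling of $\be_3$). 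Finally, $\al|_{\fix(\be_3)\setminus\{f\}}$ is any bijection onto $\fix(\be_3)\setminus\{f'\}$, contributing $(n-4)!$. Each of $(1,2)$ and $(1,3)$ therefore contributes $9(n-3)^2(n-4)!$; summing for $k=3$ yields $3(n-3)!+9(n-3)^2(n-4)!=(3(n-3)+1)\cdot 3(n-3)!$, and for $k=4$ the single contribution simplifies to $9(n-3)^2(n-4)!=3(n-3)\cdot 3(n-3)!$.

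The main obstacle I expect is the bookkeeping in cases $(1,2)$ and $(1,3)$: especially remembering that $f'$ is allowed to equal $f$ (an easy-to-miss factor of $(n-3)$ instead of $(n-4)$), and correctly using the block condition to identify which of the two bijections of $\supp(\be_3)\setminus\{b_x\}$ onto $\supp(\be_3)\setminus\{b'\}$ yields $k_2=2$ versus $k_2=3$.
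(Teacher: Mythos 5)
Your proof is correct, but it takes a more self-contained route than the paper for most of the cases. The paper's own proof of Theorem~\ref{the-3cycle} is essentially a citation: the cases $k=3$ and $k=4$ are read off from the general formulas for $c(3,\be)$ and $c(4,\be)$ in Theorems 5.1 and 5.2 of \cite{morriv}, the case $k\geq 7$ from Proposition 6.1 of \cite{morriv}, and only $k=5,6$ are derived from Proposition~\ref{final} as you do. You instead re-derive $k=0$, $k=3$, $k=4$, and the vanishing for $k\geq 7$ directly from Observation~\ref{fixedp} and an explicit enumeration over the pairs $(k_1,k_2)$. I checked the key steps: the exclusions $k_2\neq 1$ and $(k_1,k_2)\neq(2,2)$ are argued correctly (the predecessor argument and the ``$\be_3(w)$ lands on an element sent to a fixed point'' argument both hold); the count $9(n-3)^2(n-4)!$ for each of $(1,2)$ and $(1,3)$ is right, including the subtle point that $f'=\al(b_x)$ may equal $f$, and the observation that exactly one of the two bijections of $\supp(\be_3)\setminus\{b_x\}$ onto $\supp(\be_3)\setminus\{b'\}$ yields a good commuting point at the middle element (since for a $3$-cycle exactly one of $\be(c)=d$, $\be(d)=c$ holds for distinct $c,d$ in its support); and the totals recombine to the stated formulas and sum to $n!$. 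What your approach buys is independence from the unpublished general theorems of \cite{morriv} for $k\in\{3,4\}$ and $k\geq7$, at the cost of a case analysis that would not scale to longer cycles; the paper's approach buys brevity by leaning on the general machinery.
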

\begin{proof}
The case $c(k, \be_3)$, for $k \geq 7$, follows from Proposition 6.1 in \cite{morriv}. The cases $c(3, \be)$ and  $c(4, \be)$ follow from Theorems 5.1 and 5.2 in \cite{morriv}, respectively. The cases $c(5, \be)$ and $c(6, \be)$ follow from Proposition~\ref{final}.
\end{proof}

\begin{theorem}\label{the-4cycle}
If $\be_4 \in S_n$ is a $4$-cycle, then 

\begin{enumerate}
\item $c(0, \be_4) = 4(n-4)!$, $n \geq 4$.
\item $c(3, \be_4) = (4n-12)4(n-4)!$, $n \geq 4$.
\item $c(4, \be_4) = \left(1+8(n-4)\right)4(n-4)!$, $n \geq 4$.
\item $c(5, \be_4) = \left(12(n-4)+8\binom{n-4}{2}\right)4(n-4)!$, $n \geq 5$.
\item $c(6, \be_4)=\left(14n^2-126n+280\right)4(n-4)!$, $n \geq 6$.
\item $c(7, \be_4)= 24\binom{n-4}{3}  4(n-4)!, n\geq 7$.
\item $c(8, \be_4) = 6 \binom{n-4}{4}4(n-4)!$, $n \geq 8$.
\item $c(k, \be_4)=0$, for $9 \leq k \leq n$.
\end{enumerate}

\end{theorem}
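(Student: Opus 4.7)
The plan is to dispatch the eight cases by the appropriate tool. The value $c(0,\be_4)=4(n-4)!$ is the classical size of the centralizer of a $4$-cycle in $S_n$. The formulas for $c(3,\be_4)$ and $c(4,\be_4)$ are immediate consequences of Theorems 5.1 and 5.2 of \cite{morriv}, respectively. The formulas for $c(7,\be_4)$ and $c(8,\be_4)$ are the $m=4$ instances of Proposition~\ref{final} (the cases $k=2m-1$ and $k=2m$). The vanishing $c(k,\be_4)=0$ for $k\geq 9$ follows from Proposition 6.1 of \cite{morriv}, since the maximum value of $H(\al\be_4,\be_4\al)$ is $2m=8$ when $\be_4$ is a $4$-cycle. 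This leaves $c(5,\be_4)$ and $c(6,\be_4)$ as the only genuinely new computations.

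For these two I would use the block notation of equation (\ref{alpha-arr}) and classify each admissible $\al$ by the pair $(k_1,k_2)$, where $k_1$ is the number of bad commuting points of $\al$ and $\be_4$ in $\fix(\be_4)$ and $k_2$ is the number of blocks in the induced block partition of $\supp(\be_4)$ (equivalently, the number of bad commuting points in $\supp(\be_4)$). By Observation~\ref{fixedp}, $k_1$ equals the number of support points sent by $\al$ into $\fix(\be_4)$, so the $k_2$ blocks consist of $k_1$ length-$1$ blocks mapped to distinct fixed points plus $k_2-k_1$ blocks of total length $4-k_1$ mapped to sub-blocks of $(b_1\,b_2\,b_3\,b_4)$. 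Combined with $k=k_1+k_2$ and $0\leq k_1\leq k_2\leq 4$, this forces $(k_1,k_2)=(2,4)$ for $k=6$ and $(k_1,k_2)\in\{(1,4),(2,3)\}$ for $k=5$.

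For each feasible $(k_1,k_2)$ I would proceed in three steps: (i) select the set $S\subset\supp(\be_4)$ of size $k_1$ together with a block partition of $\supp(\be_4)$ compatible with $(k_1,k_2)$; (ii) enumerate admissible images $J_1,\ldots,J_{k_2}$ subject to the condition that $J_iJ_{i+1\bmod k_2}$ is never a block of the support cycle, which is automatic unless both $J_i$ and $J_{i+1}$ are length-$1$ images inside $\supp(\be_4)$; (iii) extend $\al$ to $\fix(\be_4)$ by any bijection from $\fix(\be_4)$ onto the complement of $\al(\supp(\be_4))$, contributing a universal factor $(n-4)!$ and automatically realizing exactly $k_1$ bad commuting points in $\fix(\be_4)$ (the $k_1$ unused support points must appear in this image).

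The main obstacle is step (ii), where the count splits by whether $S$ is a cyclically adjacent or a diagonal pair in $\supp(\be_4)$. For $(k_1,k_2)=(2,4)$ the adjacent case yields $8$ admissible support assignments (one forced inequality $j_{i+1}\neq\be_4(j_i)$ on the unique consecutive support-to-support pair, out of $12$ ordered pairs) and the diagonal case yields $12$ (no such inequality), giving $c(6,\be_4)=(4\cdot 8+2\cdot 12)(n-4)(n-5)(n-4)!=56(n-4)(n-5)(n-4)!$. For $(k_1,k_2)=(1,4)$ an inclusion-exclusion on the two consecutive support-to-support inequalities produces $12$ assignments per singleton choice of $S$; for $(k_1,k_2)=(2,3)$ only the adjacent $S$ is compatible with the length-$2$ support block (which must span two cyclically consecutive positions, impossible if $S$ is diagonal), giving $4$ assignments. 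Summing yields $c(5,\be_4)=48(n-4)(n-4)!+16(n-4)(n-5)(n-4)!=16(n-4)(n-2)(n-4)!$, and these rearrange to the stated forms $\bigl(14n^2-126n+280\bigr)\cdot 4(n-4)!$ and $\bigl(12(n-4)+8\binom{n-4}{2}\bigr)\cdot 4(n-4)!$, respectively.
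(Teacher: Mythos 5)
Your proposal is correct, and its numbers agree with the theorem: for $k=5$ you get $48(n-4)(n-4)!+16(n-4)(n-5)(n-4)!=\bigl(12(n-4)+8\binom{n-4}{2}\bigr)\cdot 4(n-4)!$, and for $k=6$ you get $56(n-4)(n-5)(n-4)!=\bigl(14n^2-126n+280\bigr)\cdot 4(n-4)!$. For the cases $k\in\{0,3,4,7,8,\ge 9\}$ you cite exactly the same sources as the paper, and your $k=5$ argument is essentially the paper's: its subcases $[4,1]$ and $[3,2]$ (bad points in support vs.\ in $\fix(\be_4)$) are your types $(k_1,k_2)=(1,4)$ and $(2,3)$, and the factor-by-factor counts coincide (your $12$ admissible support assignments per singleton $S$ match the paper's $4\times 3$, and your $4\times 4$ adjacent-pair count matches its $4$ choices of $\{a_{i_3},a_{i_4}\}$ times $4$ choices of $a_1$). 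The genuine divergence is at $k=6$: the paper does not count directly but sets $c(6,\be_4)=n!-\bigl(c(7,\be_4)+c(8,\be_4)+\sum_{k=0}^{5}c(k,\be_4)\bigr)$, whereas you enumerate the unique feasible type $(2,4)$ and split on whether the two support points sent into $\fix(\be_4)$ are cyclically adjacent ($4\times 8$ support assignments, one forbidden concatenation) or antipodal ($2\times 12$, none), arriving at $56(n-4)(n-5)(n-4)!$ directly. Your route costs one extra case analysis but is self-contained and, together with $\sum_k c(k,\be_4)=n!$, furnishes an independent consistency check on the other seven formulas; the paper's complementation is shorter but inherits any error made elsewhere in the list.
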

\begin{proof}
The case $c(k, \be_4)$, for $k \geq 9$, follows from Proposition 6.1 in \cite{morriv}. The cases $k=3$ and $k=4$ follow from Theorems~5.1 and 5.2  in \cite{morriv}, respectively. The cases $k=7$ and $k=8$ follow from Proposition~\ref{final}.

{\it Proof of the case $k=5$}. 

Let $\al$ be a permutation that $5$-commute with $\be_4=(a_1 a_2 a_3 a_4)$. In this proof we use the notation $[s, f]$ to indicate that $\be_4$ has $s$ (resp. $f$) b.c.p. of $\al$ and $\be_4$ in $\supp(\be_4)$ (resp. $\fix(\be_4)$). When $k=5$, it is easy to see that the unique options are either $[4, 1]$ or $[3, 2]$.

{\it Subcase $[4, 1]$}. For this case, and without loss of generality, any permutation (in block notation) that $5$-commute with $\be_4$ are seen as  

\begin{equation*}
\left(
\begin{array}{cccccccc}
 a_1 & a_2 & a_3 & a_4   \\
f_1' & a_{i_1} & a_{i_2} & a_{i_3} 
 \end{array}\right)
 \left( \begin{array}{cccccccc}
f_1 \\
a_{i_4} 
 \end{array} \right)  
 \left( \begin{array}{cccccccc}
f_{2} \\
f_{2}' 
 \end{array} \right) \dots
  \left( \begin{array}{cccccccc}
f_{n-m} \\
f_{n-m}' 
 \end{array} \right),
\end{equation*}

where $f_j, f_j' \in \fix(\be_4)$, for $j \in \{1, \dots, n-m\}$, and $a_{i_j}  a_{i_{j+1}}$ is not a block in $(a_1 \dots a_4)$, for $j \in \{1, 2\}$.  There are $n-4$ ways to select the point $f_1$ and there are $4$ ways to select the point $a_{i_4}$. There are $4$ ways to select point $a_1 \in \supp(\be_4)$ and there are $n-4$ ways to select the fixed point $f_1'$. It is easy to check that once we have selected one point $z'$ from $\{a_1, a_2, a_3, a_4\} \setminus \{a_{i_4}\}$ as the image under $\al$ of one point $z$ in  $\{a_2, a_3, a_4\}$, the images under $\al$ of the points in $\{a_2, a_3, a_4\} \setminus \{z\}$ are uniquely determined, i.e., after the selection of $z$ (in $3$ ways) there are unique bijection from  $\{a_2, a_3, a_4\}$ onto $\supp(\be_4)\setminus \{a_{i_4}\}$ with the desired properties (to have four b.c.p. in $\supp(\be)$). Finally, there are $(n-5)!$ bijections from $\fix(\be_4)\setminus \{f_1\}$ onto $\fix(\be_4) \setminus \{f_1'\}$. Therefore, for this case we have $12(n-4)4(n-4)!$ permutations.

{\it Subcase $[3, 2]$}. For this case, and without loss of generality, any permutation (in block notation) that $5$-commute with $\be_4$ are seen as 

\begin{equation*}
\left(
\begin{array}{cccccccc}
 a_1 & a_2 & a_3 & a_4   \\
f_1' & f_2' & a_{i_1} & a_{i_2} 
 \end{array}\right)
 \left( \begin{array}{cccccccc}
f_1 \\
a_{i_3} 
 \end{array} \right)  
 \left( \begin{array}{cccccccc}
f_{2} \\
 a_{i_4}  
 \end{array} \right) 
 \left( \begin{array}{cccccccc}
f_{3} \\
 f_{3}'  
 \end{array} \right) 
 \dots
  \left( \begin{array}{cccccccc}
f_{n-m} \\
f_{n-m}' 
 \end{array} \right),
\end{equation*}

where $f_j, f_j' \in \fix(\be_4)$,  for $j \in \{1, \dots, n-m\}$, and $a_{i_1}  a_{i_{2}}$ is a block in $(a_1 \dots a_4)$. Notice that either $a_{i_3}a_{i_4}$ or $a_{i_4}a_{i_3}$ is a block in $(a_1 \dots a_4)$.
There are $\binom{n-4}{2}$ ways to select the subset $\{f_1, f_2\} \subseteq \fix(\be_4)$. There are $4$ ways to select the subset $\{x, y\} \subset \{a_1, a_2, a_3, a_4\}$ that will satisfy $\al(\{f_1, f_2\})=\{x, y\}=\{a_{i_3}, a_{i_4}\}$ (once we select a point, say $x$, the second is uniquely determined). There are $2$ bijections from $\{f_1, f_2\}$ onto $\{x, y\}$. There are $4$ ways to select $a_1$ from $\sup(\be_4)$. There are  $\binom{n-4}{2}$ ways to select the set $\{x', y'\} \subseteq \fix(\be_4)$  that will satisfy $\al(\{a_1, a_2\})=\{x', y'\}=\{f_1', f_2'\}$. There are $2$ bijections from $\{a_1, a_2\}$ onto $\{x', y'\}$. Finally there are $(n-6)!$ bijections from $\fix(\be_4) \setminus \{f_1, f_2\}$ onto $\fix(\be_4) \setminus \{f_1', f_2'\}$. Therefore, for this case, we have

\[
8\binom{n-4}{2}4(n-4)!,
\]
permutations $\al$ that $5$-commute with $\be_4$. Therefore
\[
c(5, \be_4)=12(n-4)4(n-4)!+8\binom{n-4}{2}4(n-4)!
\]
Finally, as
\[
c(6, \be_4)=n!-\left(c(7, \be_4)+c(8, \be_4)+\sum_{k=0}^5c(k, \be_4)\right), 
\] 
the result for $c(6, \be_4)$ follows by direct calculation.
\end{proof}

\subsection{The case of $(n-1)$-cycles}
Let $\pi=p_1 \dots p_n$ be a permutation of $[n]$ in its one line notation. A substring $p_i p_{i+1}$, with $1\leq i \leq n-1$, is called a {\it succession} of $\pi$ if $p_{i+1}=p_i+1$. Let $S(n)$ denote the number of permutations in $S_n$ without a succession.  In~\cite[Sec. 5.4]{chara} there are some formulas for $S(n)$. For example
\[
S(n)=(n-1)!\sum_{k=0}^{n-1}(-1)^k\frac{n-k}{k!}, n \geq 1,
\]
and the recursive formula
\[
S(n)=(n-1)S(n-1)+(n-2)S(n-2), \text{ for } n \geq 3,  \text{and } S(1) = S(2) =1.
\]
The sequence A000255~\cite{slo3} satisfies the recurrence relation
\[
A000255(n)=nA000255(n-1)+(n-1)A000255(n-2), \text{ for } n \geq 2,
\]
and $A000255(0)=1$ and $A000255(1)=1$. 

It is easy to see that 
$S(n)=A000255(n-1)$, for $n \geq 1$. 
Let $C(k)$ be the number of cyclic permutations of $\{1, \dots , k\}$ with no $i \mapsto i+1\bmod k$ (see \cite[Sec. 5.5]{chara}). The number $C(k)$ is sequence A000757 in \cite{oeis}. 
\begin{theorem}\label{the-n-1c}
Let $n \geq 4$ be an integer. If $\be_{n-1} \in S_n$ is any $(n-1)$-cycle, then 
\[
c(k, \be_{n-1})=(n-1)\binom{n-1}{k}C(k)+(n-1)^2\binom{n-3}{k-3}S(k-2). 
\]

\end{theorem}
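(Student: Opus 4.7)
The plan is to split on whether $\alpha$ fixes the unique fixed point of $\beta_{n-1}$. Write $\beta_{n-1}=(b_1 b_2 \dots b_{n-1})$ with $\fix(\beta_{n-1})=\{f_1\}$ and indices of $b_i$ read modulo $n-1$. Either $\alpha(f_1)=f_1$ (Case 1) or $\alpha(f_1)\in\supp(\beta_{n-1})$ (Case 2); I will show these cases contribute exactly the two summands in the formula.

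\textbf{Case 1.} By Observation~\ref{fixedp}(1), $f_1$ is a g.c.p., so all $k$ b.c.p.'s lie in $\supp(\beta_{n-1})$, and $\alpha$ restricts to a permutation of $\supp(\beta_{n-1})$ that $k$-commutes with the $(n-1)$-cycle $\beta_{n-1}|_{\supp(\beta_{n-1})}\in S_{n-1}$. By the formula for permutations $k$-commuting with an $m$-cycle in $S_m$ established in \cite{morriv} (which reads $m\binom{m}{k}C(k)$, and whose first-term form in the theorem statement already signals its reuse here), Case 1 contributes $(n-1)\binom{n-1}{k}C(k)$.

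\textbf{Case 2.} Set $b_{j_0}:=\alpha(f_1)$ and $b_{j_1}:=\alpha^{-1}(f_1)$, both in $\supp(\beta_{n-1})$; there are $(n-1)^2$ independent choices of the pair $(j_0,j_1)$. I will identify exactly three forced b.c.p.'s: $f_1$ (Observation~\ref{fixedp}(1), since $\alpha(f_1)\notin\fix(\beta_{n-1})$); $b_{j_1}$ (Observation~\ref{fixedp}(2), since $\alpha(b_{j_1})=f_1\in\fix(\beta_{n-1})$); and $b_{j_1-1}$, because $\alpha\beta_{n-1}(b_{j_1-1})=\alpha(b_{j_1})=f_1$ while $\beta_{n-1}\alpha(b_{j_1-1})\in\supp(\beta_{n-1})$ (as $\alpha(b_{j_1-1})\neq f_1$). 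These three points are distinct for $n\geq 4$, so Case 2 requires $k\geq 3$; for $k<3$ the factor $\binom{n-3}{k-3}$ vanishes, as needed.

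With $(j_0,j_1)$ fixed, $\alpha$ is determined by a bijection $p:\supp(\beta_{n-1})\setminus\{b_{j_1}\}\to\supp(\beta_{n-1})\setminus\{b_{j_0}\}$. Read the domain in cyclic order as $b_{j_1+1},b_{j_1+2},\dots,b_{j_1-1}$. For $b_i$ with $i\in\{j_1+1,\dots,j_1-2\}$ (the $n-3$ non-forced points of $\supp(\beta_{n-1})$), $b_i$ is a g.c.p.\ iff $p(b_{i+1})=\beta_{n-1}(p(b_i))$. Relabeling the codomain via $b_{j_0+i}\mapsto i$ for $i=1,\dots,n-2$ turns the $\beta_{n-1}$-successor relation on $\supp(\beta_{n-1})\setminus\{b_{j_0}\}$ into the usual $i\mapsto i+1$ on $[n-2]$, with no wrap since $b_{j_0}$ has been removed. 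Requiring exactly $k$ total b.c.p.'s translates to exactly $n-k$ successions in the image sequence, viewed as a permutation of $[n-2]$.

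The count of permutations of $[n-2]$ with exactly $n-k$ classical successions is $\binom{n-3}{n-k}S(n-2-(n-k))=\binom{n-3}{k-3}S(k-2)$, obtained by choosing the $k-3$ break positions among the $n-3$ consecutive-pair positions, which splits the sequence into $k-2$ runs of consecutively-valued entries, and then arranging these $k-2$ runs (labelled $1,\dots,k-2$ in their natural left-to-right order on $[n-2]$) so that no label $i$ is immediately followed by $i+1$, which by definition gives $S(k-2)$ arrangements. Multiplying by $(n-1)^2$ for the choices of $(j_0,j_1)$ yields the Case 2 contribution $(n-1)^2\binom{n-3}{k-3}S(k-2)$, and summing with Case 1 gives the claimed formula. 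The main obstacle is the clean identification of precisely three forced b.c.p.'s in Case 2, together with the relabeling that turns the bijection-between-arcs count into the classical succession count; everything else is bookkeeping.
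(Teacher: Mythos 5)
Your proof is correct and follows essentially the same route as the paper: you split on whether $\al$ fixes the fixed point of $\be_{n-1}$, invoke the $n$-cycle formula $m\binom{m}{k}C(k)$ for the first case, and reduce the second case to a succession-counting problem contributing $(n-1)^2\binom{n-3}{k-3}S(k-2)$. The only cosmetic difference is that you count permutations of $[n-2]$ with exactly $n-k$ classical successions via the standard identity $\binom{n-3}{k-3}S(k-2)$, whereas the paper reaches $S(k-2)$ directly by requiring its block permutation of the $k-2$ blocks to be succession-free.
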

\begin{proof}

Let $\be_{n-1}=(b_1\dots b_{n-1})(b_n)$ be an $(n-1)$-cycle in $S_n$. In this proof we write $\be$ instead of $\be_{n-1}$. Let $\CC=\{\al \in C(k, \be) \colon~\al(b_n)=b_n\}$. Notice that $\al \in \CC$ if and only if $\al\be(b_n)=\be\al(b_n)$, and this fact implies that for any $\al \in \CC$, $\al|_{\supp(\be)}$ is a permutation of $\{b_1, ..., b_{n-1}\}$ that $k$-commutes with the cyclic permutation $(b_1\dots b_{n-1})$. By Theorem~4.13 in \cite{morriv}, we have that $|\CC|=(n-1)\binom{n-1}{k}C(k)$. 

Let $\overline{\CC} =C(k, \be) \setminus \CC$. We calculate $|\overline{\CC} |$ by constructing all permutations in $\overline{\CC}$, i.e., permutations $\al$ such that $\al(b_n) \neq b_n$ and that do not commute with $\be$ on exactly $k-1$ points in $\supp(\be)$. Let $x=\al(b_n)$. First, it is easy to see that $\al \in \overline{\CC}$ if and only if $\al(b_n) \in \supp(\be)$. Therefore, Theorem 3.8 in \cite{morriv} implies that $\al$ restricted to $\supp(\be)$ can be written as

\begin{equation}\label{eqalpha-1}
\al|_{\supp(\be)}=\left(
\begin{array}{ccccccccc}
B_1 & \dots &B_j & b_r & B_{j+1} &\dots &B_{k-2}\\

B_{i_1}' & \dots & B_{i_j}' & b_n &\; B_{i_{j+1}}' & \dots &B_{i_{k-2}}' 
 \end{array}
 \right),
\end{equation}
where
\begin{enumerate}
\item the set $\{B_1, \dots, B_j,  b_r, B_{j+1}, \dots, B_{k-2}\}$ is a block partition of $(b_1  \dots b_{n-1})$ such that
\[
(b_1  \dots b_{n-1})=(B_1  \dots B_j b_rB_{j+1} \dots  B_{k-2});
\]
\item the string $B_{i_1}'  \dots  B_{i_j}'  \; B_{i_{j+1}}'  \dots B_{i_{k-2}}'$ is a block permutation of $B_1'  \ldots  B_{k-2}'$, where the set $\{B_1',   \ldots,  B_{k-2}', x\}$ is a block partition of $(b_1  \dots b_{n-1})$, with $|B_{i_j}'|=|B_j|$, for $1\leq j \leq k-2$, and such that
\begin{equation}\label{blocks-x}
(b_1  \dots b_{n-1})=(B_1' \dots B_{k-2}'x);
\end{equation}

\item $B_{i_s}'B_{i_{s+1}}'$ is not a block in $(b_1 \dots b_{n-1})$ for $s \in \{1, ..., k-3\} \setminus \{j\}$, and $B'_{i_j}B'_{i_{j+1}}$ may or not may be a block in $(b_1 \dots b_{n-1})$. 
\end{enumerate}

Now we count the number of ways to construct $\al \in \overline{\CC}$ as in~(\ref{eqalpha-1}). There are $(n-1)$ ways to choose $x \in \supp(\be)$ such that $x=\al(b_n)$.  There are $\binom{n-3}{k-3}$ ways to choose the block partition $\{B_1',   \ldots,  B_{k-2}', x\}$ of $(b_1 \dots b_{n-1})$. Indeed, we only need to choose the first element of $k-3$ blocks between $n-3$ points in $\{b_1, \dots, b_{n-1}\}$ because the corresponding first points of blocks $x$ and $B_1'$ in (\ref{blocks-x}) are uniquely determined ($x$ was already chosen). There are $n-1$ ways to select the first element of block $B_1$ in~(\ref{eqalpha-1}) and the rest of the blocks are uniquely determined by the lengths of blocks $B_1', \dots, B_{k-2}', x$. Then we have 
\[
(n-1)^2\binom{n-3}{k-3}R,
\]
ways to construct $\al|_{\supp(\be)}$ as in~(\ref{eqalpha-1}), where $R$ is the number of ways to construct the second row of the matrix in~(\ref{eqalpha-1}) in such a way that we have exactly $k-1$ b.c.p. of $\al$ and $\be$. Now, the matrix in~(\ref{eqalpha-1}) can be rewritten as 
\begin{equation}\label{eqalpha3}
\al|_{\supp(\be)}=\left(
\begin{array}{ccccccccc}
B_{j+1} &\dots &B_{k-2}& B_1 & \dots &B_j & b_r &\\
B_{i_{j+1}}' & \dots &B_{i_{k-2}}' &B_{i_1}' & \dots & B_{i_j}' & b_n
 \end{array}
 \right).
\end{equation}
Notice that $b_r$ is necessarily a b.c.p. of $\al$ and $\be$. Therefore, in order to obtain exactly $k-1$ b.c.p. we need for the blocks product 
\[ 
B_{i_{j+1}}'  \dots B_{i_{k-2}}' B_{i_1}'  \dots B_{i_j}'
\]
to not have a string of the form $B_r'B_{r+1}'$, for $1\leq r \leq k-3$ (in this way we obtain a b.c.p. per block), which is true if and only if $B_{i_{j+1}}'  \dots B_{i_{k-2}}' B_{i_1}'  \dots B_{i_j}'$ is equal to a block permutation $B_{\tau(1)}'  \dots B_{\tau(k-2)}'$ of $B_{1}'  \dots B_{k-2}'$, where $\tau$ is a permutation without a succession. As there are $S(k-2)$ such permutations we have that $R=S(k-2)$ and the result follows.  
\end{proof}
Previously, in joint work with Rutilo Moreno (that is part of his PhD Thesis~\cite{tesisrm}), we obtained the formula  
\[
c(k, \be_{n-1})=(n-1)\binom{n-1}{k}C(k)+(n-1)^2\binom{n-3}{k-3}T(k), \text{for } n \geq 4.
\]
where 
\[
T(k)=(k-2)C(k-2)+(2k-5)C(k-3)+(k-3)C(k-4), \text{for } k \geq 4.
\]
Using this formula and Theorem~\ref{the-n-1c}, we have the following relation between sequences A000255 and A000757, for $k \geq 4$.
{\small
\[
A000255(k-3)=(k-2) A000757(k-2) + (2k-5)A000757(k-3) + (k-3)A000757(k-4).
\]
}
When $k=3$ we have
\begin{eqnarray*}
c(3, \be_{n-1})&=&(n-1)\binom{n-1}{3} + (n-1)^2, \\
&=&(n-1) \times \left(\binom{n-1}{3} + n-1\right),\\
&=&(n-1) \times A000125(n-2), \text{ for }n\geq 3,
\end{eqnarray*}
where 
\[
A000125(m)=\binom{m+1}{3}+m+1, \text{ for }m \geq 0, 
\]
is the formula for the Cake numbers. We do not find sequences in the OEIS database that correspond to $c(\be_{n-1}, k)$ when $4 \leq k \leq 10$ and it is possible that no such relations existed until now for $k \geq 11$. 

By direct calculation we obtain 
\begin{eqnarray*}
c(\leq 3, \be_{n-1})&=&(n-1) \times \left(\binom{n-1}{3} + n\right).\\
\end{eqnarray*}
The sequence $A011826(m)$ is equal to $\binom{m}{3} +(m+1)$, for $1 \leq m \leq 1000$, as was noted Layman in~\cite{linu}. Then we have that
\[
c(\leq 3, \be_{n-1})=(n-1) \times A011826(n-1),\text{ for }  2 \leq n \leq 1000.
\]
\subsection{Other cases}

We obtain formulas for $c(n, \be)$ and $c(n-1, \be)$ for some special choices of $\be \in S_n$.    

\begin{proposition}\label{special}
If $\be \in S_{2m}$ is a permutation with exactly $m$ fixed points, then 
\begin{enumerate}
\item $c(2m, \be)=(m!)^2$.
\item $c(2m-1, \be)=m^2(m!)^2$.
\end{enumerate}
\end{proposition}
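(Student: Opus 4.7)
The plan is to apply Observation \ref{fixedp} in the spirit of the proof of Proposition \ref{final}. Write $F = \fix(\be)$ and $S = \supp(\be)$, both of size $m$, and let $k_1$ and $k_2$ denote the number of bad commuting points of $\al$ and $\be$ that lie in $F$ and $S$, respectively. A bookkeeping identity I plan to use repeatedly is $k_1 = |\al(F) \cap S| = |\al(S) \cap F|$, which follows from Observation \ref{fixedp}(1) together with the fact that $\al$ is a bijection and $|F|=|S|$. Combined with Observation \ref{fixedp}(2), this gives the free lower bound $k_2 \geq k_1$.

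For part 1, $k = 2m = k_1 + k_2$ and $k_1, k_2 \leq m$ force $k_1 = k_2 = m$. The identity above then yields $\al(F) = S$ and $\al(S) = F$, and Observation \ref{fixedp}(2) guarantees that all $m$ points of $S$ are automatically b.c.p., so there is no further restriction. Thus $c(2m, \be)$ counts pairs consisting of a bijection $F \to S$ and a bijection $S \to F$, giving $(m!)^2$.

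For part 2, the relations $k_1 + k_2 = 2m - 1$, $k_2 \geq k_1$, and $k_2 \leq m$ force $(k_1, k_2) = (m-1, m)$, so exactly one fixed point $f_0 \in F$ has $\al(f_0) \in F$ and exactly one support point $s_0 \in S$ has $\al(s_0) \in S$. The main step, and the place where I expect any real difficulty, will be verifying that $s_0$ is automatically a b.c.p.\ under these conditions. The argument I plan to give is short: $\be(s_0) \in S \setminus \{s_0\}$ because $s_0 \in \supp(\be)$, so by the uniqueness of $s_0$ we have $\al(\be(s_0)) \in F$, whereas $\be(\al(s_0)) \in S$, forcing the two values to differ. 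Granted this, the count is routine: there are $m$ choices for $f_0$, $m$ choices for $\al(f_0) \in F$, $m!$ choices for the injection $\al|_{F \setminus \{f_0\}} \colon F \setminus \{f_0\} \to S$, and $m!$ choices for the bijection $\al|_S$ onto the remaining $m$-set $(F \setminus \{\al(f_0)\}) \cup \{s_0'\}$, where $s_0'$ is the unique element of $S$ missed by $\al(F)$. Multiplying yields $m \cdot m \cdot m! \cdot m! = m^2 (m!)^2$.
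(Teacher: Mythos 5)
Your proof is correct and follows essentially the same route as the paper: force $(k_1,k_2)=(m,m)$ or $(m-1,m)$ via Observation~\ref{fixedp} and then count the bijections between $\fix(\be)$ and $\supp(\be)$ directly, arriving at the same factorizations $(m!)^2$ and $m\cdot m\cdot m!\cdot m!$. The only difference is that you explicitly justify the two steps the paper labels as easy (that the unique g.c.p.\ and its image lie in $\fix(\be)$, and that the one support point mapped into the support is automatically a b.c.p.), and both justifications are sound.
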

\begin{proof}
{\it Part 1).} Let $\al$ be any permutation that $2m$-commutes with $\be \in S_{2m}$. By Observation~\ref{fixedp}, $\al\left(\fix(\be)\right) \subseteq \supp(\be)$. As $|\fix(\be)|=|\supp(\be)|=m$ then $\al(\fix(\be))$ is equal to $\supp(\be)$, which implies that $\al(\supp(\be))=\fix(\be)$. Then $\al|_{\fix(\be)}$ (resp. $\al|_{\supp(\be)}$) is any bijection from $\fix(\be)$ (resp. $\supp(\be)$) onto $\supp(\be)$ (resp. $\fix(\be)$). Therefore we have $(m!)^2$ ways to construct $\al$.  \\
{\it Proof of part  2).} Let $x$ be the unique g.c.p. of $\al$ and $\be$. It is easy to see that $x$ and $\al(x)$ belongs to $\fix(\be)$.  
Then, $\al|_{\fix(\be)}$ is any bijection from $\fix(\be)$ onto $\supp(\be) \setminus \{a\} \cup \{x'\}$, for some $a \in \supp(\be)$ and $x' \in \fix(\be)$  (there are $m!$ such bijections), and $\al|_{\supp(\be)}$ is any bijection from $\supp(\be)$ onto $\fix(\be)\setminus \{x'\} \cup \{a\}$ (there are $m!$ such bijections). As we have $m$ ways to select $x'$ and $m$ ways to select $a$ then $c(2m-1, \be)=m^2 (m!)^2$. 
\end{proof}

For $\be$ as in the previous proposition we have
\[
c(2m, \be)=(m!)^2=A001044(m), m \geq 0,
\]
and
\begin{eqnarray*}
c(2m-1, \be)&=&m^2 \times (m!)^2=A000290(m) \times A001044(m),\\
&=&m \times m(m!)^2 = m \times A084915(m).
\end{eqnarray*}
From this we obtain the following identity 
\[
m \times A084915(m) = A000290(m) \times A001044(m).
\]

\begin{proposition}
If $\be \in S_{2m-1}$ is a permutation with exactly $m-1$ fixed points, then $c(2m-1, \be)=(m!)^2$. 
\end{proposition}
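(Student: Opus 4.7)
The plan is to mirror the structure of Proposition~\ref{special}, but account for the parity-induced asymmetry: now $|\fix(\be)|=m-1$ while $|\supp(\be)|=m$, so the image $\al(\fix(\be))$ cannot exhaust $\supp(\be)$. Since $k=2m-1=n$, every point of $[n]$ must be a bad commuting point of $\al$ and $\be$. I would begin by applying Observation~\ref{fixedp}(1) to every $x \in \fix(\be)$, which forces $\al(\fix(\be)) \subseteq \supp(\be)$. Because $|\fix(\be)|=m-1<m=|\supp(\be)|$, this is an injection missing exactly one point $a \in \supp(\be)$, and by bijectivity of $\al$ we get $\al(\supp(\be)) = \fix(\be) \cup \{a\}$. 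Let $y \in \supp(\be)$ be the unique preimage of $a$; then $\al$ bijects $\supp(\be) \setminus \{y\}$ onto $\fix(\be)$.

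Next I would verify that, conversely, any $\al$ with the above structure lies in $C(2m-1,\be)$, i.e., that every point is automatically a b.c.p. For $x \in \fix(\be)$ this is Observation~\ref{fixedp}(1); for $x \in \supp(\be) \setminus \{y\}$ we have $\al(x) \in \fix(\be)$ and Observation~\ref{fixedp}(2) applies. The only delicate check is the point $y$. Since $y \in \supp(\be)$, we have $\be(y) \in \supp(\be) \setminus \{y\}$, and therefore $\al\be(y) = \al(\be(y)) \in \fix(\be)$. On the other hand, $\be\al(y) = \be(a) \in \supp(\be)$ because $a \in \supp(\be)$ and $\supp(\be)$ is $\be$-invariant. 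As $\fix(\be) \cap \supp(\be) = \emptyset$, we conclude $\al\be(y) \neq \be\al(y)$, so $y$ is indeed a b.c.p.

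The count is then straightforward: there are $m!$ injections $\al|_{\fix(\be)} \colon \fix(\be) \hookrightarrow \supp(\be)$ (equivalently, $m$ choices for the omitted point $a$ times $(m-1)!$ bijections onto $\supp(\be)\setminus\{a\}$), and, once $a$ is determined, there are $m!$ bijections $\al|_{\supp(\be)} \colon \supp(\be) \to \fix(\be) \cup \{a\}$. Multiplying yields $c(2m-1,\be) = (m!)^2$.

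The main subtlety, as in Proposition~\ref{special}, is confirming that the single ``exceptional'' point $y$ with $\al(y) \in \supp(\be)$ does not ruin the b.c.p. count; the argument above shows this is free of charge, and — notably — independent of the finer cycle type of $\be$ on $\supp(\be)$, since it relies only on the disjointness $\fix(\be) \cap \supp(\be) = \emptyset$ and $\be$-invariance of $\supp(\be)$.
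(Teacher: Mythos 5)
Your proof is correct and follows essentially the same approach as the paper: force $\al(\fix(\be))\subseteq\supp(\be)$, deduce $\al(\supp(\be))=\fix(\be)\cup\{a\}$, and count $m(m-1)!\cdot m!=(m!)^2$. You additionally verify the converse (that every such $\al$ really has all $2m-1$ points as bad commuting points, including the exceptional point $y$), a check the paper leaves implicit.
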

\begin{proof}
As all the fixed points are b.c.p. of $\al$ and $\be$, then $\al\left(\fix(\be)\right)$ is a subset of $\supp(\be)$ and $\al\left(\supp(\be)\right)=\fix(\be) \cup \{a\}$, for some $a \in \supp(\be)$. Then, $\al|_{\fix(\be)}$ should be any bijection from $\fix(\be)$ onto a subset $B$ of $\supp(\be)$, with $|B|=m-1$, and $\al|_{\supp(\be)}$ should be any bijection from $\supp(\be)$ onto $\fix(\be) \cup \left(\supp(\be) \setminus B\right)$. There are $\binom{m}{m-1}(m-1)!=m(m-1)!$ ways to construct $\al|_{\fix(\be)}$ and there are $m!$ ways to construct $\al|_{\supp(\be)}$. Then we have that $c(2m-1,\be)=m(m-1)!m!=(m!)^2$. 
\end{proof}

\subsection{Upper bound for $c(\leq k, \be)$ when $\be$ is an $n$-cycle}
In the following theorem we use the convention that $(-1)!=1$. 

\begin{theorem}\label{the-n-cycle}
If $\be$ is an $n$-cycle, then
\[
c(\leq k, \be) \leq n\binom{n}{k}(k-1)!-n\binom{n}{k}+n,
\]
with equality for $0 \leq k \leq 3$ and $k=n$.
\end{theorem}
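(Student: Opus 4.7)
My plan is to re-express $c(\leq k,\be)$ as $n$ times the number of $n$-cycles within Hamming distance $k$ of $\be$, and then bound that count by parametrising those $n$-cycles via $k$-subsets and permutations. The starting point is the identity $H(\al\be,\be\al)=H(\be,\al^{-1}\be\al)$ for every $\al\in S_n$, which follows because $\al\be(x)=\be\al(x)$ is equivalent to $\be(x)=\al^{-1}\be\al(x)$. Since $\al^{-1}\be\al$ is always an $n$-cycle and the centralizer $\cen(\be)=\langle\be\rangle$ has order $n$, each $n$-cycle $\be'\in S_n$ equals $\al^{-1}\be\al$ for exactly $n$ choices of $\al$. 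Hence $c(\leq k,\be)=n\cdot N_k$, where $N_k$ denotes the number of $n$-cycles $\be'\in S_n$ with $H(\be,\be')\leq k$.

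To bound $N_k$, fix a $k$-subset $D'=\{d_1,\dots,d_k\}\subseteq[n]$ enumerated in the cyclic order induced by $\be$, and view the $\be$-cycle as the concatenation of arcs $A_1,\dots,A_k$, where $A_i$ runs from $\be(d_i)$ to $d_{i+1}$ (indices on the $d_\ell$'s taken modulo $k$). For $\t\in S_k$ define $\be_\t\in S_n$ by $\be_\t(d_i)=\be(d_{\t(i)})$ on $D'$ and $\be_\t=\be$ on $[n]\setminus D'$. A direct verification shows that $\be_\t$ is a permutation at Hamming distance $|\{i:\t(i)\neq i\}|\leq k$ from $\be$, and that it is a single $n$-cycle if and only if the arc-succession permutation $\rho\in S_k$ defined by $\rho(i)=\t(i+1)$ is itself a $k$-cycle. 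Since $\t\mapsto\rho$ is a bijection on $S_k$, there are exactly $(k-1)!$ admissible $\t$'s per $D'$, giving $\binom{n}{k}(k-1)!$ admissible pairs. The assignment $(D',\t)\mapsto\be_\t$ is surjective onto the $n$-cycles within distance $k$ of $\be$: for any such $\be'$ at distance $j$ with differing set $D$, each of the $\binom{n-j}{k-j}$ $k$-supersets $D'\supseteq D$ yields a unique admissible $\t$ with $\be_\t=\be'$.

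Writing $N_j^{*}$ for the number of $n$-cycles at distance exactly $j$ from $\be$ (so $N_1^{*}=N_2^{*}=0$, the first by bijectivity and the second by parity of $n$-cycles), and observing that $\be$ itself is obtained from $\binom{n}{k}$ pairs (one for each $D'$, always with $\t=\id$), the preimage count yields the identity
\[
\binom{n}{k}(k-1)!=\binom{n}{k}+\sum_{j=3}^{k}\binom{n-j}{k-j}N_j^{*}.
\]
Because $\binom{n-j}{k-j}\geq 1$ for $0\leq j\leq k\leq n$, we obtain $\sum_{j=3}^{k}N_j^{*}\leq\binom{n}{k}[(k-1)!-1]$, so that $N_k=1+\sum_{j=3}^{k}N_j^{*}\leq\binom{n}{k}(k-1)!-\binom{n}{k}+1$; multiplying by $n$ yields the stated bound. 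Equality requires $\binom{n-j}{k-j}=1$ for every $j$ with $N_j^{*}>0$, which is vacuous for $k\in\{0,1,2\}$ (only $N_0=1$ contributes), holds for $k=3$ because the only relevant binomial is $\binom{n-3}{0}=1$, and holds for $k=n$ because $\binom{n-j}{n-j}=1$ for all $j$. The main technical hurdle will be verifying carefully that $\be_\t$ is an $n$-cycle precisely when $\rho$ is a $k$-cycle, a statement that drives both the count of admissible pairs and the surjectivity of the parametrisation.
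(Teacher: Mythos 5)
Your proof is correct, but it reaches the bound by a genuinely different route than the paper. The paper works directly with the permutations $\al$: it invokes the block-notation characterization (Theorem 3.8 of \cite{morriv}) to write every $(\leq k)$-commuting $\al$ as a matrix whose rows are block products of a $k$-block partition of $\be$, counts $n\binom{n}{k}(k-1)!$ such constructions, corrects the over-count only for the $n$ commuting permutations (each produced $\binom{n}{k}$ times), and settles the equality cases ``by direct calculation.'' You instead conjugate: from $H(\al\be,\be\al)=H(\be,\al^{-1}\be\al)$ and $|\cen(\be)|=n$ you reduce to counting $n$-cycles within Hamming distance $k$ of $\be$, parametrize them by pairs $(D',\t)$, and prove surjectivity with the \emph{exact} multiplicity $\binom{n-j}{k-j}$ at each distance $j$. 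The two parametrizations are combinatorially the same ($\binom{n}{k}$ subsets times $(k-1)!$ arrangements times $n$), but your version buys two things: it is self-contained (the surjectivity argument replaces the appeal to the block-characterization theorem), and the identity $\binom{n}{k}(k-1)!=\binom{n}{k}+\sum_{j=3}^{k}\binom{n-j}{k-j}N_j^{*}$ delivers the inequality and all the stated equality cases ($0\leq k\leq 3$ and $k=n$) uniformly, rather than by separate computation; it even recovers the exact formula $c(k,\be)=n\binom{n}{k}C(k)$ and the relation $(k-1)!=\sum_j\binom{k}{j}C(j)$ as a byproduct. The one step you flagged as a hurdle --- that $\be_\t$ is an $n$-cycle iff the arc-succession permutation $\rho$ is a $k$-cycle --- is indeed the crux, but it is a standard and easily verified fact (the cycle structure of $\be_\t$ on $[n]$ is the cycle structure of $\rho$ on the arcs), so there is no gap.
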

\begin{proof}
Let $\be=(b_1 \dots b_n)$. First, we select a block partition $\{B_1, \dots, B_k\}$ of $b_1 \dots b_n$ (in $\binom{n}{k}$ ways), such that $B_1  \dots  B_k=b_1 \dots b_n$. Next, we construct a permutation $\al$ in block notation as \[
\al=\left(\begin{array}{cccc}
P_1  &\dots &P_k   \\
 B_{i_1}   & \dots & B_{i_k}  
 \end{array} \right),
\]  
where $(P_1 \dots P_k)=(b_1\dots b_n)$, $B_{i_1}  \dots  B_{i_k}$ is any block permutation of $B_1  \dots  B_k$, and $|P_j|=|B_{i_j}|$, for $1\leq j\leq k$. We have $n$ ways to select the first element in block $P_1$ and the rest of the blocks in first row are uniquely determined by the lengths of blocks $B_i$. For each selection of the partition $\{B_1, \dots, B_k\}$, there are $(k-1)!$ ways to arrange the blocks $B_1, \dots, B_k$ in the second row of this matrix. Indeed, for each selection of partition $\{B_1,  \dots, B_k\}$, the $k$ cyclic permutation of $B_1\dots B_k$ will give $k$ repeated permutations. Thus, there are at most $n\binom{n}{k}(k-1)!$ permutations that $(\leq k)$-commute with $\be$. We can reduce this bound a little more. Notice that for every one of the $n$ different possibilities  for $P_1\dots P_k$ in the first row of the matrix, the vector $\langle b_1, \dots, b_n \rangle$ appears $\binom{n}{k}$ times in the second row (there are $\binom{n}{k}$ block partitions $\{B_1, \dots, B_k\}$ of $b_1 \dots b_n$ such that $B_1  \dots  B_k=b_1 \dots b_n$, and every one of this partitions appear once in the second row).  Therefore, we have 
\begin{eqnarray*}
 c(\leq k, \be) \leq n\binom{n}{k}(k-1)!-n\binom{n}{k}+n,
\end{eqnarray*}
The fact that equality is reached when $0\leq k \leq 3$ and $k=n$ follows by direct calculation.  
\end{proof}

\section{Relations with integer sequences}\label{relations}
In this section, we write some formulas for $c(k, \be)$ and $c(\leq k, \be)$ in terms of sequences in the OEIS database. These formulas are obtained by simple inspection or with the help of a computer algebra system and an exhaustive search on the OEIS database. We use the following notation. We write formula $a \times b$ as A$i \times$A$j$, to mean that $a$ (or $b$) is the formula for sequence A$i$ (or A$j$) in the OEIS database.  
\subsection{Transpositions}\label{trans-bij}

In \cite{morriv} we showed the following result.
\begin{proposition}\label{transpo}
If $\be_2 \in S_n$ is a transposition, then
\begin{enumerate}
\item $c(0, \be_2)=2(n-2)!$, $n \geq 2$.
\item $c(3,\be_2)=4(n-2)(n-2)!$, $n \geq 3$.
\item $c(4,\be_2)=(n-2)(n-3)(n-2)!$, $n \geq 4$.
\item $c(k, \be_2)=0$, $5\leq k\leq n$.
\end{enumerate}
\end{proposition}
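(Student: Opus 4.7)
The plan is to exploit the rigid structure imposed by the fact that $\supp(\be_2)$ has only two points. Writing $\be_2 = (a_1\; a_2)$, I would first establish two structural rules. Observation~\ref{fixedp} gives the rule for fixed points: $x \in \fix(\be_2)$ is a b.c.p.\ iff $\al(x) \in \{a_1, a_2\}$. For the support, a short direct computation shows that $a_1$ is a g.c.p.\ iff $\al(a_2) = \be_2(\al(a_1))$, which combined with the bijectivity of $\al$ collapses to the symmetric condition $\al(\{a_1, a_2\}) = \{a_1, a_2\}$; in particular $a_1$ and $a_2$ are both g.c.p.\ or both b.c.p.

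Armed with these two rules, the proof reduces to a single case split on $t := |\al(\{a_1, a_2\}) \cap \{a_1, a_2\}| \in \{0, 1, 2\}$. When $t = 2$, $\al$ preserves both $\supp(\be_2)$ and $\fix(\be_2)$ setwise, so there are no b.c.p.\ at all and $k = 0$. When $t = 1$, the support contributes $2$ b.c.p., and bijectivity of $\al$ forces exactly one fixed point to be mapped into $\supp(\be_2)$, giving $k = 3$. When $t = 0$, both support points map into $\fix(\be_2)$, so bijectivity forces exactly two fixed points to be mapped into $\supp(\be_2)$, giving $k = 4$. This already proves Part 4 (and incidentally that $c(1, \be_2) = c(2, \be_2) = 0$), since no $k \notin \{0, 3, 4\}$ can occur.

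What remains is a routine count in each of the three cases. For $k = 0$, the answer is immediate: $2$ choices for $\al|_{\{a_1, a_2\}}$ times $(n-2)!$ for the bijection on $\fix(\be_2)$. For $k = 4$, the ordered pair $(\al(a_1), \al(a_2))$ is any pair of distinct elements of $\fix(\be_2)$, giving $(n-2)(n-3)$ choices, after which $\al|_{\fix(\be_2)}$ is any of the $(n-2)!$ bijections onto the forced target set $(\fix(\be_2) \setminus \{\al(a_1), \al(a_2)\}) \cup \{a_1, a_2\}$. For $k = 3$, one chooses which of $a_1, a_2$ is sent into $\fix(\be_2)$, its image in $\fix(\be_2)$, and the image of the other support point within $\{a_1, a_2\}$ (a factor of $2 \cdot (n-2) \cdot 2$); bijectivity then forces exactly one fixed point to be mapped to the remaining support element ($n-2$ choices), and the remaining $n-3$ fixed points are bijected freely ($(n-3)!$ ways), yielding $4(n-2)(n-2)!$. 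The only mildly subtle point, and the main place requiring care, is verifying in the $k=3$ case that precisely one fixed point is forced to map into the support; this follows at once from $t = 1$ and the bijectivity of $\al$.
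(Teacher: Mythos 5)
Your proof is correct, and I verified the counts: they sum to $2(n-2)!+4(n-2)(n-2)!+(n-2)(n-3)(n-2)!=(n-2)!\,(n^2-n)=n!$, as they must. The comparison here is slightly unusual because the paper does not prove Proposition~\ref{transpo} at all; it simply cites \cite{morriv}, where the result is obtained from the general block-notation characterization of $k$-commuting permutations (the same machinery invoked in the paper for $3$- and $4$-cycles via Theorems 5.1, 5.2 and Proposition 6.1 of \cite{morriv}). Your argument is instead a self-contained elementary one: the key observation that $a_1$ and $a_2$ are simultaneously good or bad commuting points, with goodness equivalent to $\al(\{a_1,a_2\})=\{a_1,a_2\}$, reduces everything to the single parameter $t=|\al(\{a_1,a_2\})\cap\{a_1,a_2\}|$, and bijectivity then pins down the number of bad fixed points as $2-t$, so $k\in\{0,3,4\}$ exhausts all possibilities and Part~4 falls out for free. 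What the general approach of \cite{morriv} buys is uniformity across all cycle types (it yields $c(3,\be)$ and $c(4,\be)$ for arbitrary $\be$); what your approach buys is a short, transparent, and independently checkable proof for the transposition case, which is arguably preferable for a reader who does not want to unpack the block formalism. The one step requiring care --- that in the $t=1$ case exactly one fixed point is forced into the support --- is exactly the step you flag and justify correctly.
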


In \cite{morriv} it was noted that  $c(0, \be_2)$, $c(3, \be_2)$, and  $c(4, \be_2)$ coincide with the number of permutations in  $S_n$, $n \geq 2$, having exactly $2$, $3$, and $4$ points, respectively,  on the boundary of their bounding square \cite{deutsch} (sequences A208529,  A208528, and A098916, respectively). Here we provide an explicit relationship between these numbers. The following definition is taken from problem 1861 in \cite{deutsch}. A permutation $\al \in S_n$
 can be represented in the plane by the set of $n$ points $P_\al=\{(i, \al(i))\colon ~1\leq i \leq n\}$. The {\it bounding square} of $P_\al$ is the smallest square bounding $P_\al$. In other words, the bounding square can be described as the square with sides parallel to the coordinate axis containing $(1,1)$ and $(n,n)$ (see \cite{nacin}). 
\begin{proposition}
\begin{enumerate}
\item[]
\item A permutation $\al$ has only two points on the boundary of their bounding square if and only if $\al$ commutes with transposition $(1, n)$.
\item A permutation $\al$ has only $m$ points on the boundary of their bounding square if and only if $\al$ $m$-commutes with transposition $(1, n)$, for $m \in \{3, 4\}$.
\end{enumerate}
\end{proposition}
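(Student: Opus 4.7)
The plan is to reduce both the ``boundary point'' and ``$k$-commuting'' counts to the same two-parameter statistic on $\alpha$, namely the size of the intersection $\{1,n\} \cap \alpha^{-1}(\{1,n\})$, and then read off the equivalence.

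First I would give a clean description of the boundary-point count. A point $(i,\alpha(i))$ lies on the boundary of the bounding square iff either $i \in \{1,n\}$ (vertical sides) or $\alpha(i) \in \{1,n\}$ (horizontal sides). Hence the number $B(\alpha)$ of boundary points equals $|\{1,n\} \cup \alpha^{-1}(\{1,n\})|$, and since both sets have size two, inclusion-exclusion gives
\[
B(\alpha) \;=\; 4 \;-\; \bigl|\{i \in \{1,n\} : \alpha(i) \in \{1,n\}\}\bigr|.
\]
Thus $B(\alpha) \in \{2,3,4\}$, with $B(\alpha)=2$ iff $\alpha(\{1,n\})=\{1,n\}$, $B(\alpha)=3$ iff exactly one of $\alpha(1),\alpha(n)$ lies in $\{1,n\}$, and $B(\alpha)=4$ iff neither does.

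Next I would compute the bad commuting points of $\alpha$ and $\tau=(1,n)$ in parallel. For $x \in \{2,\ldots,n-1\}$, since $\tau$ fixes $x$, one has $\alpha\tau(x)=\alpha(x)$ and $\tau\alpha(x)=\tau(\alpha(x))$, so $x$ is a b.c.p.\ iff $\alpha(x) \in \{1,n\}$. For $x=1$, equality $\alpha\tau(1)=\tau\alpha(1)$ becomes $\alpha(n)=\tau(\alpha(1))$; by the involution identity this is symmetric in $1$ and $n$, so $1$ is a g.c.p.\ iff $n$ is. A short case split on $|\{1,n\} \cap \alpha^{-1}(\{1,n\})|$ then yields:
\begin{itemize}
\item If $\alpha(\{1,n\})=\{1,n\}$ (both cases $\alpha(1)=1,\alpha(n)=n$ and $\alpha(1)=n,\alpha(n)=1$), then $1$ and $n$ are g.c.p., and because $\alpha$ is a bijection no $x \in \{2,\ldots,n-1\}$ maps into $\{1,n\}$, giving $0$ b.c.p.
\item If exactly one of $\alpha(1),\alpha(n)$ lies in $\{1,n\}$, then $1$ and $n$ are b.c.p., and there is exactly one further preimage of $\{1,n\}$ among $\{2,\ldots,n-1\}$, giving $3$ b.c.p.
\item If neither $\alpha(1)$ nor $\alpha(n)$ lies in $\{1,n\}$, then $1$ and $n$ are b.c.p.\ (the equation $\alpha(n)=\tau(\alpha(1))=\alpha(1)$ would violate injectivity), and both preimages of $\{1,n\}$ lie in $\{2,\ldots,n-1\}$, giving $4$ b.c.p.
\end{itemize}

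Comparing the two characterisations gives $B(\alpha)=H(\alpha\tau,\tau\alpha)$ whenever $B(\alpha) \in \{3,4\}$, and $B(\alpha)=2$ iff $H(\alpha\tau,\tau\alpha)=0$, which is precisely parts (1) and (2) of the proposition. The only point requiring care is the symmetry argument showing that $1$ and $n$ are simultaneously g.c.p.\ or simultaneously b.c.p., and the verification that the non-degenerate cases $\alpha(1)=1$ or $\alpha(n)=n$ behave correctly under the case split; otherwise the proof is bookkeeping.
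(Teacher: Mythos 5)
Your proof is correct, and at its core it rests on the same observation as the paper's: both the boundary-point count and the Hamming distance $H(\al\tau,\tau\al)$ are governed by how $\al$ acts on $\{1,n\}$. The differences are in how the two sides of the equivalence are handled. The paper quotes Deutsch's characterisation of the boundary-point classes directly and only verifies the $k$-commuting conditions for $m=2$ and $m=4$ (namely $\al(\{1,n\})=\{1,n\}$, respectively $\al(1),\al(n)\notin\{1,n\}$), then gets $m=3$ for free from the fact that $\{C(0,\be),C(3,\be),C(4,\be)\}$ and the boundary-point classes $B_2,B_3,B_4$ are both partitions of $S_n$. You instead make everything self-contained: you derive $B(\al)=4-\left|\{1,n\}\cap\al^{-1}(\{1,n\})\right|$ by inclusion--exclusion, compute the bad commuting points in all three cases (including $m=3$) directly, and match the two statistics case by case. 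Your route is slightly longer but more robust --- it does not depend on the cited boundary-square facts nor on the complementation trick, and it makes explicit the symmetry argument showing that $1$ and $n$ are simultaneously good or bad commuting points, which the paper leaves implicit. Both arguments are sound; yours would serve as a fully self-contained replacement.
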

\begin{proof}
In this proof we use some paragraphs in \cite{deutsch}.

{\em Proof of 1:} Permutation $\al$ commutes with $(1, n)$ if and only if either $\al(1)=1$ and $\al(n)=n$, or $\al(1)=n$  and $\al(n)=1$, i.e., if and only if $P_\al$ contains both $(1, 1)$ and $(n, n)$, or both $(1, n)$ and $(1, n)$, which is true if and only if $P_\al$ has exactly two points on the boundary of its bounding square, see  \cite{deutsch}. 

{\em Proof of 2:} Permutation $\al$ $4$-commutes with $(1, n)$ if and only if $\al(1) \not\in\{1, n\}$ and $\al(n) \not\in\{1, n\}$, i.e., if and only if $P_\al$ does not contain any of the points $(1, 1)$, $(n, n)$, $(1, n)$, and $(1, n)$, which is true if and only if $P_\al$ has exactly four points on the boundary of its bounding square, see  \cite{deutsch}. The case $m=3$ follows because both $\{C(0, \be), C(3, \be), C(4, \be)\}$ and the collection of sets $B_m$ of permutations $\al$ that have $m$ points on the boundary of the bonding square of $P_\al$, for $m \in \{2, 3, 4\}$,  are partitions of $S_n$.   
\end{proof}

By direct calculation we obtain 
\begin{eqnarray*}
c(\leq 3, \be_2)&=&2(n-2)!+4(n-2)(n-2)!,\\
&=& 2 \times (2(n-2)+1)(n - 2)!,\\
&=& 2 \times A007680(n-2), n \geq 2.\\
c(\leq 4, \be_2)&=&n!,\\
&=&A000142(n), n\geq 2.
\end{eqnarray*}

From these  formulas we obtain the following relation that was announced by the author in the FORMULA section of sequence A007680.
\begin{eqnarray*}
A007680(n-2)&=&\frac{A208529(n)+A208528(n)}{2}, n\geq 2.
\end{eqnarray*}
We can also obtain the following relations.
\begin{eqnarray*}
A000142(n) &=& A208529(n)+A208528(n)+A098916(n), n \geq 3.\\
A000142(n) &=&A098916(n)+2\times A007680(n-2), n \geq 3.
\end{eqnarray*}

\subsection{Some cases for $3$ and $4$-cycles}

Tables~\ref{table3c} and~\ref{table3cl} show formulas for $c(k, \be_3)$ (Theorem~\ref{the-3cycle}) and $c(\leq k, \be_3)$, respectively, in  terms of sequences in the OEIS database. The formulas for $c(\leq k, \be_3)=\sum_{i=0}^kc(k, \be_3)$ are obtained by direct calculation.  

\begin{table}[htdp]
\scriptsize
\begin{center}
\begin{tabular}{|c|l|l|l|l|}
\hline
k  & $c(k, \be_3)$ &   \\
\hline
0 & $3(n-3)!$ & $A052560(n-3)$, $n \geq 3$ \\ 
\hline
3 & $(3(n-3)+1) \times 3(n-3)!$ & $A016777(n-3) \times A052560(n-3)$, $n \geq 3$  \\
\hline
4 & $3(n-3) \times 3(n-3)!$, $n\geq 4$ & $A008585(n-3) \times A052560(n-3)$, $n \geq 3$ \\
\hline
5 & $3(n-4)(n-3) \times 3(n-3)!$, $n\geq 5$& $A028896(n-4) \times A052560(n-3)$, $n \geq 4$ \\
& $3(n-4) \times 3(n-3)(n-3)!$, $n\geq 5$ & $A008585(n-4) \times A083746(n-1), n \geq 5$\\
& & $A008585(n-4) \times A052673(n-3), n \geq 5$ \\
& $9(n-4) \times (n-3)(n-3)!, n \geq 5$ & $A008591(n-4) \times A001563(n-3), n \geq 5$ \\
& $3 \times 3 (n-4)  \times (n-3)(n-3)!$, $n\geq 5$ & $3 \times A008585(n-4) \times  A001563(n-3), n \geq 5$ \\
\hline
6 & $2\binom{n-3}{3} \times 3(n-3)!$, $n\geq 6$ & $A007290(n-3) \times A052560(n-3)$, $n \geq 6$ \\
&$6\binom{n-3}{3} \times (n-3)!$, $n\geq 6$ &  $A007531(n-3) \times A000142(n-3) , n\geq 6$\\
& $(n-3)(n-4) \times (n-5)(n-3)!, n \geq 6$ & $A002378(n-4) \times A052571(n-5), n \geq 6$\\
& $(n-5)(n-3) \times (n-3)! (n-4), n \geq 6$ & $A005563(n-5) \times A062119(n-3), n \geq 6$\\
& $(n-5)(n-4) \times (n-3) (n-3)!, n \geq 6$ & $A002378(n-5) \times A001563(n-3), n \geq 6$\\
& $(n-3)^2 (n-4) \times (n-4)! (n-5), n \geq 6$ & $A045991(n-3) \times A062119(n-4), n \geq 6$\\
& $(n-3)^2 (n-4)(n-5) \times (n-4)! , n \geq 6$ & $A047929(n-3) \times A000142(n-4), n \geq 6$\\
& $(n-3)^2 \times (n-4)(n-5) (n-4)! , n \geq 6$ & $A000290(n-3) \times A098916(n-2), n \geq 6$\\
& $(n-5)(n-3)^2 \times (n-4)(n-4)!, n \geq 6$ & $A152619(n-5) \times A001563(n-4), n \geq 6$\\
\hline
$\geq 7$& 0 & $A000004(n)$\\
\hline
\end{tabular}
\end{center}
\caption{Formulas for $c(k, \be_3)$ written in terms of sequences in the OEIS database.}
\label{table3c}
\end{table}
\begin{table}[htdp]
\scriptsize
\begin{center}

\begin{tabular}{|c|l|l|l|l|}
\hline
k  & \centering{$c(\leq k, \be_3)$} &   \\
\hline
0 & $3(n-3)!$ & $A052560(n-3)$, $n \geq 3$ \\ 
\hline
3 & $(3n-7) \times 3(n-3)!$ & $A016789(n-3) \times A052560(n-3)$, $n \geq 3$ \\

 & $(9n-21) \times (n-3)!$ & $A017233(n-3) \times A000142(n-3), n \geq 3$ \\
 \hline
4 & $\left(6(n-3)+2\right) \times 3(n-3)!$ & $A016933(n-3)  \times A052560(n-3)$, $n \geq 4$ \\
& $6 \times \left(3(n-3)+1\right)(n-3)!$ & $6 \times A082033(n-3)$ \\
& $\left(9(n-3)+3\right) \times 2((n-1)-2)!$ & $A017197(n-3) \times A208529(n-1), n \geq 4$\\
\hline
5 & $(20-15n+3n^2)3(n-3)!$ & $A077588(n-2)  \times A052560(n-3)$, $n \geq 5$\\
\hline
$\geq 6$ & $n!$ & $A000142(n)$, $n \geq 6$ \\
& $2\binom{n}{3} \times 3(n-3)!$ & $A007290(n)  \times A052560(n-3)$,  $n \geq 6$\\
& & \\
\hline
\end{tabular}
\end{center}
\caption{Formulas for $c(\leq k ,\be_3)$ written in terms of sequences in the OEIS database.}
\label{table3cl}
\end{table}

From these tables we obtain the following identities.
{\small
\begin{eqnarray*}
A016789(n) &=& A016777(n)+1, n \geq 0. (\text{see}~\cite{slo2}).\\
A016933(n) &=& A016789(n)+A008585(n), n \geq 0.\\
                    &=& A016777(n)+A008585(n)+1, n \geq 0. \\
A077588(n-2) &=& A016933(n-3)+A028896(n-4), n \geq 4.\\
 &=& A016789(n-3)+A008585(n-3)+A028896(n-4), n \geq 4.\\
&=& A016777(n-3)+A008585(n-3)+A028896(n-4)+1, n \geq 4. \\
A007290(n) &=& A077588(n-2)+A007290(n-3), n \geq 3.\\
&=& A016933(n-3)+A028896(n-4) +A007290(n-3), n \geq 4. \\
&=& A016789(n-3)+A008585(n-3)+A028896(n-4) +\\
&&+ \ A007290(n-3), n \geq 4.
\end{eqnarray*}
}
Tables~\ref{table4c} and~\ref{table4cl} show formulas for $c(k, \be_4)$ and $c(\leq k, \be_4)$, respectively, in  terms of sequences in the OEIS database. From these tables we deduce the following identities.
\begin{eqnarray*}
A052578(n)&=& A000142(n-1) \times \left(A016813(n)-1\right), n \geq 1.\\
A052578(n)&=& A000142(n-1) \times A008586(n), n \geq 1.\\
A008598(n)&=& 4 \times \left(A016813(n)-1\right), n \geq 0.\\
A016813(n) &=& A017593(n-1) - A017077(n-1), n \geq 1.\\
\end{eqnarray*}
We encourage the interested reader to obtain more identities from these tables.
\begin{table}[htdp]
\scriptsize
\begin{center}
\begin{tabular}{|c|l|l|}
\hline
k  & $c(k, \be_4)$ &   \\
\hline
0 & $4 \times (n-4)!$ & $4\times A000142(n-4), n \geq 4$\\ 
& $4  (n-4)!$ & $A052578(n-4), n \geq 5$\\
&  $2 \times 2 (n-4)!$ & $2 \times A208529(n-2), n \geq 4$\\
& & $2 \times A052849(n-4), n \geq 5$\\
\hline
3 & $4 \times 4(n-3)!$& $4 \times A052578(n-3), n \geq 4$\\
& $8 \times 2 (n-3)!$ & $8 \times A208529(n-1), n \geq 4$\\
& $16(n-3) \times (n-4)!$ & $A008598(n-3)\times A000142(n-4), n \geq 4$\\
 & $4(n-3) \times 4(n-4)!$ & $A008586(n-3) \times A052578(n-4), n \geq 5$\\
 \hline
4 & $\left(1+8(n-4)\right) \times 4  (n-4)!$ & $A017077(n-4) \times A052578(n-4), n \geq 5$\\
\hline
5 &$(2(n-3))^2-4) \times 4(n-4)!, n \geq 4$ & $A134582(n-3)\times A052578(n-4), n \geq 5$\\
\hline
6 & $14 \times (n-5)(n-4) \times 4  (n-4)!$ &  $14\times A002378(n-5)  \times A052578(n-4), n \geq 5$\\
 & $28 \times \binom{n-4}{2} \times 4(n-4)!$ & $28 \times A000217(n-5) \times A052578(n-4), n \geq 6$\\ 
 \hline
7 
& $24 \times \binom{n-4}{3}  \times 4  (n-4)!,$ & $24 \times A000292(n-6) \times A052578(n-4), n \geq 6$\\
& $4 \times 6 \binom{n-4}{3} \times  4(n-4)!$ & $4 \times A007531(n-4) \times A052578(n-4), n \geq 7$\\
& $3 \times 8 \binom{n-4}{3}\times  4(n-4)!$ & $3 \times A130809(n-4) \times A052578(n-4), n \geq 7$\\
\hline
8 & $24 \times \binom{n-4}{4}\times (n-4)!$ & $ 24 \times A000332(n-4) \times  A000142(n-4), n \geq 8$ \\
 & $24 \binom{n-4}{4} \times (n-4)!$  & $A052762(n-4) \times A000142(n-4), n \geq 8$ \\
&$6 \times \binom{n-4}{4} \times 4  (n-4)!$ & $ 6 \times A000332(n-4) \times A052578(n-4), n \geq 8$\\
& $6 \binom{n-4}{4} \times 4  (n-4)!$ & $A033487(n-7) \times A052578(n-4), n \geq 8$\\
\hline
$\geq 9$& 0 & $A000004(n)$\\
\hline
\end{tabular}
\end{center}
\caption{Formulas for $c(k, \be_4)$ written in terms of sequences in the OEIS database.}
\label{table4c}
\end{table}

\begin{table}[htdp]
\scriptsize
\begin{center}
\begin{tabular}{|c|l|l|}
\hline
k  & $c(\leq k, \be_4)$ &   \\
\hline

 $3$ & $4 \times \left(4(n-3)+1 \right) \times  (n-4)!$ & $4 \times A016813(n-3) \times  A000142(n-4)$, $n \geq 4$\\
& $\left(4(n-3)+1 \right) \times 4 (n-4)!$ & $A016813(n-3) \times A052578(n-4)$, $n \geq 5$\\
\hline
$4$ & $4 \times \left(12(n-4)+6\right) \times  (n-4)! $ & $4 \times  A017593(n-4)  \times A000142(n-4)$, $n \geq 4$\\
  & $8 \times \left(6(n-4)+3\right) \times (n-4)!$ & $8 \times A016945(n-4)  \times A000142(n-4)$, $n \geq 4$\\
  & $ \left(12(n-4)+6\right) \times 4(n-4)! $ & $ A017593(n-4)  \times A052578(n-4), n \geq 5$\\
 & $2 \times \left(6(n-4)+3\right) \times 4(n-4)!$ & $2 \times A016945(n-4)  \times A052578(n-4), n \geq 5$\\
\hline
$ 5$ & $8 \times \left(2(n-4)^2+10(n-4)+3\right)  \times (n-4)!$ & $8 \times A152813(n-4) \times A000142(n-4)$ , $n \geq 4$\\
& $2 \times \left(2(n-4)^2+10(n-4)+3\right) \times 4 (n-4)!$ & $2 \times A152813(n-4) \times A052578(n-4), n \geq 5$\\
\hline
$6$ & $ 24 \times \left( 3(n-4)^2+(n-4)+1\right) \times  (n-4)!$ & $24 \times A056108(n-4) \times  A000142(n-4)$ , $n \geq 4$\\
& $ 6 \times \left(3(n-4)^2+(n-4)+1\right) \times 4(n-4)!$ & $6 \times A056108(n-4) \times A052578(n-4), n \geq 5$\\
\hline
$7$ & $24 \times \left(2n^3-21n^2+79n-105 \right) \times (n-4)!$ & $24 \times A005894(n-4) \times A000142(n-4)$, $n \geq 4$\\
 & $6 \times \left(2n^3-21n^2+79n-105 \right) \times 4 (n-4)!$ & $6 \times A005894(n-4) \times A052578(n-4), n \geq 5$\\
\hline
$\geq $ & $n(n-1)(n-2)(n-3)/4 \times 4 \times (n-4)!$ & $A033487(n) \times 4 \times A000142(n-4)$, $n \geq 4$\\
8& $ n(n-1)(n-2)(n-3)/4 \times 4  (n-4)!$ & $A033487(n) \times A052578(n-4), n \geq 5$\\
\hline
\end{tabular}
\end{center}

\caption{Formulas for $c(\leq k ,\be_4)$ written in terms of sequences in the OEIS database.}
\label{table4cl}
\end{table}

\subsection{The case of $n$-cycles}
Let $C(k)=A000757(k)$. The following proposition was proved in \cite{morriv}.
\begin{proposition}
Let $n$ and $k$ be nonnegative integers, $0\leq k \leq n$. Let $\be_n \in S_n$ be an $n$-cycle. Then 
\[
c(k, \be_n)=n\binom{n}{k} C(k). 
\]
\end{proposition}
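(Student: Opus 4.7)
The plan is to reduce the count to enumerating $n$-cycles at a prescribed Hamming distance from $\be_n$, and then to carry out that enumeration by a cut-and-rearrange analysis of the directed cycle diagram of $\be_n$.

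First, I would use the fact that right-multiplication by a fixed permutation preserves Hamming distance, which gives $H(\al\be_n,\be_n\al)=H(\al\be_n\al\inv,\be_n)$. Writing $\gamma:=\al\be_n\al\inv$, this is again an $n$-cycle (being conjugate to $\be_n$), and we need $H(\gamma,\be_n)=k$. Since the centralizer of an $n$-cycle in $S_n$ is the cyclic group generated by $\be_n$, of order $n$, each $n$-cycle $\gamma$ arises as $\al\be_n\al\inv$ for exactly $n$ values of $\al$. Hence
\[
c(k,\be_n)=n\cdot N_k, \qquad N_k:=\bigl|\{n\text{-cycles }\gamma\in S_n \colon H(\gamma,\be_n)=k\}\bigr|,
\]
and it suffices to show $N_k=\binom{n}{k}C(k)$.

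Next, to compute $N_k$, I would view $\be_n=(b_1\dots b_n)$ as a directed Hamilton cycle and let $D=\{b_i\colon\gamma(b_i)\neq\be_n(b_i)\}$, a set of size $k$. The $n-k$ edges of $\be_n$ starting outside $D$ must also be edges of $\gamma$, and removing the $k$ remaining edges partitions $\be_n$ into $k$ directed arcs $A_1,\dots,A_k$ (in the cyclic order in which they appear in $\be_n$, possibly of length one). Since $\gamma$ is a Hamilton cycle containing every retained edge, it must be built by arranging the arcs in some new cyclic order, which corresponds to a cyclic permutation $\sigma$ of $[k]$. The requirement that \emph{every} vertex of $D$ be a point of genuine disagreement translates into the condition that $\sigma$ never sends $j\mapsto j+1\pmod k$, and such cyclic permutations are counted exactly by $C(k)=A000757(k)$. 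Multiplying the $\binom{n}{k}$ choices of $D$ by the $C(k)$ admissible arrangements yields $N_k=\binom{n}{k}C(k)$, and combining with the first step gives the claimed formula.

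The step requiring the most care is verifying that the assignment $(D,\sigma)\mapsto\gamma$ is a bijection onto the set of $n$-cycles at Hamming distance exactly $k$ from $\be_n$. That every admissible pair produces a single Hamilton cycle is immediate, since a cyclic concatenation of arcs cannot split into shorter cycles; the delicate part is checking that the bad set of the resulting $\gamma$ is exactly $D$ rather than a proper subset. This is precisely what the no-succession condition encodes: if $\sigma(j+1)\equiv\sigma(j)+1\pmod k$ for some $j$, then in $\gamma$ the arc $A_{\sigma(j)}$ would be followed by its original $\be_n$-successor $A_{\sigma(j)+1\bmod k}$, restoring the cut edge and producing an unintended point of agreement between $\gamma$ and $\be_n$ at the last vertex of $A_{\sigma(j)}$.
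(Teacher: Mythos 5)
Your argument is correct, and it takes a genuinely different route from the one the paper relies on. The paper does not reprove this proposition at all: it cites Theorem 4.13 of \cite{morriv}, whose method (visible in this paper's proof of the $(n-1)$-cycle theorem) is a direct enumeration of the permutations $\al$ themselves, written in block notation $\bigl(\begin{smallmatrix}B_1\dots B_k\\ J_1\dots J_k\end{smallmatrix}\bigr)$ with respect to $\be_n$: one chooses the block partition in $\binom{n}{k}$ ways, aligns the top row in $n$ ways, and counts the admissible bottom rows by the no-succession condition, yielding $C(k)$. You instead first reduce the problem via right-invariance of the Hamming metric and the orbit--stabilizer count over the conjugacy class, so that the factor $n$ appears as the order of the centralizer $\langle\be_n\rangle$ rather than as the number of top-row alignments, and then count $n$-cycles at Hamming distance exactly $k$ from $\be_n$ by the cut-and-rearrange analysis of the cycle diagram. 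The two approaches share the same combinatorial core (the arcs you cut are exactly the blocks $J_i$, and your cyclic arrangement $\sigma$ with no $j\mapsto j+1\bmod k$ is exactly the condition that $J_iJ_{i+1}$ is never a block), but your reduction makes the structure of the formula transparent --- it exhibits $c(k,\be_n)/n$ as the number of $n$-cycles at distance $k$ from a fixed one --- and it correctly handles the one delicate point, namely that the bad set of the reassembled cycle is exactly $D$ and not a proper subset. The only caveats worth recording are the degenerate conventions at small $k$ (e.g.\ $C(0)=1$ accounts for $\gamma=\be_n$ itself, and $C(1)=C(2)=0$ is consistent with $c(1,\be_n)=c(2,\be_n)=0$), which your argument handles without modification.
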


Let $C_{n, k}= n\binom{n}{k} C(k)$. The triangle $\{C_{n, k}\}$ is labeled now as sequence A233440 in~\cite{oeis}. 
 In Table~\ref{rela1} we show some formulas for $c(k, \be_n)$ written in terms of sequences in the OEIS database, for $0 \leq k \leq 16$. This table was announced by the author in \cite{rivera}. We searched for some values of $k \geq 17$, but no relationships with sequences in the OEIS database were found until now. 

\begin{table}[htdp]
\scriptsize
\begin{center}
\begin{tabular}{|c|l|}
\hline
$k$  & $c(k, \be_n)$ \\
\hline
0 & A001477$(n)$, $n \geq0$\\
\hline
1  & A000004$(n)$, $n \geq1$\\
\hline
2  & A000004$(n)$, $n \geq2$\\
\hline
3  & A004320$(n-2)\ = \ $A047929$(n)/6$, $n \geq3$\\
\hline
4 & A027764$(n-1)$, $n \geq4$\\
\hline
5  & A027765$(n-1) \times$A000757$(5)$, $n \geq5$  \\
\hline
6  & A027766$(n-1)\times$A000757$(6)$, $n \geq6$ \\
\hline
7 & A027767$(n-1)\times$A000757$(7)$, $n \geq7$ \\
\hline
8  & A027768$(n-1)\times$A000757$(8)$, $n \geq8$ \\
\hline
9 & A027769$(n-1)\times$A000757$(9)$, $n \geq9$ \\
\hline
10 & A027770$(n-1)\times$A000757$(10)$, $n \geq10$ \\
\hline
11 & A027771$(n-1)\times$A000757$(11)$, $n \geq11$ \\
\hline
12 & A027772$(n-1)\times$A000757$(12)$, $n \geq12$\\
\hline
13 & A027773$(n-1)\times$A000757$(13)$, $n \geq13$\\
\hline
14 & A027774$(n-1)\times$A000757$(14)$, $n \geq14$\\
\hline
15 & A027775$(n-1)\times$A000757$(15)$, $n \geq15$\\
\hline
16 & A027776$(n-1)\times$A000757$(16)$, $n \geq16$\\
\hline
\end{tabular}
\end{center}
\caption{Formulas for $c(k, \be_n)$, $0 \leq k \leq 16$, written in terms of sequences in the OEIS database.}
\label{rela1}
\end{table}

By direct computation we obtain the following formulas for  $c(\leq k, \be)$, for $k\in\{3, 4\}$. For the case of $k \in \{5, 6, 7\}$ no relationship with sequences in OEIS was found and it is possible that for the greatest values of $k$ no such relationship existed until now. 

\begin{eqnarray*}
 c(\leq 3, \be_n)&=&n\left(1+\binom{n}{3}\right) = n \times A050407(n+1), n \geq 0.\\
 c(\leq 4, \be_n)&=&n\left(1+\binom{n+1}{4}\right) = n \times A145126(n-2), n\geq 2.
\end{eqnarray*}

From this we can deduce the following identities 
\begin{eqnarray*}
n \times A050407(n+1)&=& A233440(n, 0) + A233440(n, 3), n \geq 0.\\
n \times A145126(n-2) &=&A233440(n, 0) + A233440(n, 3) + A233440(n, 4), n \geq 2.\\
\end{eqnarray*}

From these last relations and from the relations in Table~\ref{rela1} we have

\begin{eqnarray*}
A050407(n+1)&=&A004320(n-2)/n+1, n \geq 2.\\
A233440(n, 4)&=&n \times \left(A145126(n-2)-A050407(n+1)\right), n \geq 2.\\
A027764(n-1) &=& n \times \left(A145126(n-2)-A050407(n+1)\right), n\geq 4.
\end{eqnarray*}

\subsection{Some cases for $k=3$ and $k=4$}
R. Moreno and the author of this article presented in~\cite{morriv} formulas for $c(3, \be)$ and $c(4, \be)$, where $\be$ is any permutation. We use these results to obtain formulas for some type of permutations. Let $\be=(1 \dots  m)(m+1  \dots  2m) \in S_{2m}$. Then 
\begin{eqnarray*}
c(3, \be)&=&2m^2 \times 2 \binom{m}{3},\\
&=&A001105(m) \times A007290(m), m \geq 2.\\
c(4, \be)&=&2m^2 \times \left(2\binom{m}{4}+m\binom{m}{2}\right),\\
&=& 2m^2 \times \left(2\binom{m}{4}+ (m-1) m^2/2 \right),\\
&=& A001105(m) \times \left(A034827(m)+ A006002(m-1)\right), m \geq 2.
\end{eqnarray*}

Let $\be_m \in S_n$ be an $m$-cycle, for $n \geq 3$ and $1 < m \leq n$. By Theorem 5.1 in~\cite{morriv}, we have  
\[
c(3, \be_m)=m(n-m)!\left(\binom{m}{3}+m(n-m)\right). 
\]
In Table~\ref{table3cmcycle} we write formulas for $c(3, \be_m)$ for some values of $m$ in terms of sequences in the OEIS database.

\begin{table}[htdp]
\scriptsize
\begin{center}

\begin{tabular}{|c|l|l|}
\hline
$m$  & $c(3, \be_m)$ &   \\
\hline
$5$ & $25 \times \left((n-4)!+(n-5)!\right)$ & $25 \times A001048(n-4)!, n \geq 5$\\
& $5(n-3) \times 5 (n-5)!$ & $A008587(n-3) \times A052648(n-5)$, $n \geq 6$ \\ 
\hline
$6$ & $6 \times \left(6(n-3)+2\right) \times  (n-6)!$ & $6 \times A016933(n-3) \times  A000142(n-6)$, $n \geq 6$\\
\hline
$7$ & $7(n-2) \times 7  (n-7)!$ & $A008589(n-2) \times A062098(n-7)!, n \geq 8$\\
\hline
$8$ & $8(n-1) \times 8(n-8)!$ & $A008590(n-1) \times A159038(n-8), n \geq 9$\\
\hline
$9$ & $9 \times  (9n+3) \times (n-9)!$ & $9 \times  A017197(n) \times A000142(n-9), n \geq 9$\\
\hline 
$10$ & $10(n+2)\times 10  (n-10)! $& $A008592(n+2) \times A174183(n-10), n \geq 11$\\
\hline 
$11$ & $11 \times 11(n+4)\times  (n-11)!$ & $11 \times A008593(n+4)  \times A000142(n-11) , n \geq 11$\\
\hline 
$12$ & $12 \times  \left(12(n+6)+4\right) \times (n-12)!$ & $12 \times A017569(n+6)  \times  A000142(n-12) , n \geq 12$\\
\hline 
$13$ & $13 \times 13(n+9) \times (n-13)!$ & $13 \times  A008595(n+9)  \times  A000142(n-13) , n \geq 13 $\\
\hline
$14$ & $14 \times 14(n+12)\times  (n-14)!$ & $14 \times A008596(n+12)  \times  A000142(n-14) , n \geq 14 $\\
\hline
16 & $16 \times 16(n+19)\times  (n-16)!$ & $16 \times A008598(n+19)  \times  A000142(n-16) , n \geq 16 $\\
\hline
17 & $17 \times 17(n+23)\times  (n-17)!$ & $17 \times A008599(n+23)  \times  A000142(n-17) , n \geq 17 $\\
\hline
19 & $19 \times 19(n+32)\times  (n-19)!$ & $19 \times A008601(n+32)  \times  A000142(n-19) , n \geq 19 $\\
\hline
20 & $20 \times 20(n+37) \times (n-20)!$ & $20 \times  A008602(n+37) \times A000142(n-20), n \geq 20$\\
\hline
22 & $22 \times 22(n+48) \times (n-22)!$ & $22 \times A008604(n+48) \times  A000142(n-22), n \geq 22$\\
\hline
23 & $23 \times  23(n+54) \times (n-23)!$ & $23 \times A008605(n+54) \times  A000142(n-23), n \geq 23$\\
\hline
25 & $25 \times 25(n+67) \times  (n-25)!$ & $25 \times A008607(n+67) \times  A000142(n-25), n \geq 25$\\
\hline 
28 & $28 \times 28(n+89) \times  (n-28)!$ & $28 \times  A135628(n+89) \times A000142(n-28), n \geq 28$\\
\hline
29 & $29 \times 29(n+97) \times  (n-29)!$ & $29 \times A195819(n+97) \times  A000142(n-29), n \geq 29$\\
\hline
31 & $31\times 31(n+114) \times  (n-31)!$ & $31 \times A135631(n+114) \times  A000142(n-31), n \geq 31$\\
\hline
37 & $37 \times 37(n+173) \times  (n-37)!$ & $37 \times A085959(n+173) \times  A000142(n-37), n \geq 37$\\
\hline
\end{tabular}
\end{center}
\caption{Formulas for $c(3, \be_m)$ written in terms of sequences in the OEIS database.}
\label{table3cmcycle}
\end{table}

Let $\be_m \in S_n$ be an $m$-cycle, $4 \leq m \leq n$. By Theorem 5.2 in~\cite{morriv}, we have 
\[
c(4, \be_m)=m(n-m)!\left(\binom{m}{4}+m(m-2)(n-m)\right).
\]
In this case we found relationships with sequences in the OEIS database only for $4 \leq k \leq 7$ and these are shown in Table~\ref{table4cmcycle}.

\begin{table}[htdp]
\scriptsize
\begin{center}
\begin{tabular}{|c|l|l|l|l|}
\hline
$m$  & $c(4, \be_m)$ &   \\
\hline
5 & $25 \times \left(3(n-5) +1\right)\times (n-5)!$ & $25 \times A016777(n-5) \times  A000142(n-5), n \geq 5$\\
\hline
6 & $18 \times  \left(8(n-5)-3\right) \times (n-6)!$ & $18 \times A004770(n-5) \times  A000142(n-6), n \geq 6$\\
\hline
7 & $35 \times (n-6) \times 7 (n-7)!$ & $35 \times A000027(n-6) \times A062098(n-7)$, $n \geq 8$\\
& $245 \times  (n-6)!$ & $245 \times A000142(n-6), n \geq 7$\\
\hline
\end{tabular}
\end{center}
\caption{Formulas for $c(4, \be_m)$ for some $m$-cycles}
\label{table4cmcycle}
\end{table}

\begin{table}[htdp]
\scriptsize
\begin{center}

\begin{tabular}{|c|l|l|l}
\hline
$m$  &  $c(0, \be_m)$ & \\
\hline
$5$& $5(n-5)!$ & $A052648(n-5), n \geq 6$\\
\hline
$ 7$& $7(n-7)!$ &$A062098(n-7), n \geq 8$\\
\hline
$8$&  $8(n-8)!$ & $A159038(n-8), n \geq 9$\\
\hline
$10$& $10(n-10)!$ & $A174183(n-10), n \geq 11$\\
\hline
\end{tabular}
\end{center}
\caption{Formulas for the centralizer of some $m$-cycles.}
\label{centralizer}
\end{table}

\section{Final results and comments} \label{finalc}

We conclude this paper by considering the case of the size of the centralizer of some permutations. When $k=0$, $c(k, \be)$ is the size of the centralizer of $\be$, that is, if $\be$ is a permutation of cycle type  $(c_1, \dots, c_n)$, then $c(0, \be)=\prod_{i=1}^n i^{c_i}c_i!$. When $\be$ is an $m$-cycle we have that $c(0, \be)=m(n-m)!=m \times A000142(n-m)$, that we consider as a trivial relation with sequences in the OEIS database. In Table~\ref{centralizer} we present formulas for the centralizer of some $m$-cycles, where we have omitted the trivial cases.

Finally we consider two more cases. Let $\be_{m^{(2)}}$ denote a permutation of $2m$ symbols whose cycle factorization consists of a product of two $m$-cycles, and let $\be_{2^{(m)}}$ be a fixed-point free involution of  $2m$ symbols. In these cases we have
\begin{eqnarray*} 
c(0, \be_{m^{(2)}}) &=& 2m^2=A001105(m), m \geq 0.\\
c(0, \be_{2^{(m)}}) &=& 2^mm!=A000165(m), m \geq 0,
\end{eqnarray*}
 where A000165 is the double factorial of even numbers.
 
 It is possible that there are more sequences related to the formulas presented in this work. Also, we think that it is possible to obtain more formulas for $c(k, \be)$ for another special case of permutations. However, we believe that it is a difficult task to obtain a general formula for $c(k, \be)$. We have worked, without success, for the case of $m$-cycles, for every $m$, so we leave this as an open problem. Another future project is to find explicit bijections, as in the case of transpositions, between sets of $k$-commuting permutations and another combinatorial structures. 
\section{Acknowledgements} 
The author would like to thank D. Duarte, J. Lea\~nos and R. Moreno for some useful discussions and comments. The author would also like to thank the anonymous referee for his or her valuable comments and suggestions. This work was partially supported by PIFI-2014 and PROMEP (SEP, M\'exico) grants: UAZ-PTC-103 and UAZ-CA-169.

\end{document}